\numberwithin{equation}{section} 
\numberwithin{figure}{section} 
\theoremstyle{plain}
\theoremstyle{plain}
\newtheorem{thm}{Theorem}
  \theoremstyle{plain}
  \newtheorem{prop}[thm]{Proposition}
  \theoremstyle{plain}
  \newtheorem*{prop*}{Proposition}
  \theoremstyle{remark}
  \newtheorem*{rem*}{Remark}
  \theoremstyle{plain}
  \newtheorem{cor}[thm]{Corollary}
\begin{document}
\global\long\def\R{\mathbb{R}}
\global\long\def\Q{\mathbb{Q}}
\global\long\def\Z{\mathbb{Z}}
\global\long\def\C{\mathbb{C}}
\global\long\def\P{\mathbb{P}}
\global\long\def\del{\partial}
\global\long\def\M{\mathcal{M}}
\global\long\def\F{\mathcal{F}}
\global\long\def\spinc{\mbox{Spin}^{c}}
\global\long\def\s{\mathfrak{s}}
\global\long\def\t{\mathfrak{t}}
\global\long\def\cptbar{\overline{\C\P}^{2}}
\global\long\def\cpt{\C\P^{2}}

\title{Nonorientable four-ball genus can be arbitrarily large}

\author{Joshua Batson}
\begin{abstract}
The nonorientable four-ball genus $\gamma_{4}(K)$ of a knot $K\subset S^{3}$
is the smallest first Betti number of any smoothly embedded, nonorientable
surface $F\subset B^{4}$ bounding $K$. In contrast to the orientable
four-ball genus, which is bounded below by the invariants $\sigma,\tau,$
and $s$, the best lower bound in the literature on $\gamma_{4}(K)$
for any $K$ is $3$. We prove that 

\[
\gamma_{4}(K)\geq\frac{\sigma(K)}{2}-d\left(S_{-1}^{3}(K)\right),\]
where the first term is half the knot signature, and the second is
the Heegaard-Floer $d$-invariant of the integer homology sphere given
by $-1$ surgery on $K$. In particular, we show that $\gamma_{4}(T_{2k,2k-1})=k-1$.
\end{abstract}
\maketitle

\section{Introduction}

One measure of the complexity of a knot $K\subset S^{3}$ is the complexity,
as codified by genus, of the simplest surface which bounds it. For
example, the only knot which bounds a genus zero surface embedded
in $S^{3}$ is the unknot. This definition of complexity depends dramatically
on the class of surfaces allowed: orientable or nonorientable, embedded
in $S^{3}$ or $B^{4}$, and for surfaces in $B^{4}$, whether or
not the embedding is smooth or locally flat. (The genus of a nonorientable
surface with boundary is defined to be its first Betti number $b_{1}$.)
For example, the nonalternating knot $11_{31}^{n}$ from Thistlethwaite's
table bounds an orientable surface of genus $3$ in $S^{3}$, a smooth
orientable surface of genus $2$ in $B^{4}$, and a locally flat orientable
surface of genus $1$ in $B^{4}$. Certifying the minimality of these
surfaces requires a variety of modern and classical knot invariants:
the Alexander polynomial $\Delta$ has degree $3$, Ozsvath-Szabo's
$\tau$-invariant is equal to $2$, and the Murasugi signature $\sigma$
is equal to $1$; to construct the final surface, Stominiew found
a genus one concordance to a knot with Alexander polynomial $1$,
which according to a result of Freedman bounds a locally flat disk.%
\footnote{http://www.indiana.edu/\textasciitilde{}knotinfo/ %
}

We know that orientable techniques cannot apply verbatim to obstruct
nonorientable surfaces because of a simple example: the $2k+1$-twist
torus knot $T_{2,2k+1}$ bounds a Mobius band in $S^{3}$, yet the
genus $k$ Seifert surface in Figure \ref{Flo:T_2k,2k+1} actually
has minimal genus even among orientable, locally flat surfaces embedded
in $B^{4}$ bounding the knot. The global property of orientability,
perhaps recast as the existence of a top homology class or a complex
structure, is somehow critical to both the proof and truth of the
bounds involving $\Delta$, $\tau$, and $\sigma$. While some obstructions
have been found to particular knots bounding Mobius bands or punctured
Klein bottles in $B^{4}$ (\cite{Yasuhara:1996tu,Murakami:2000wb}
see \cite{Gilmer:2011tf} especially for a comprehensive survey),
the following question remained open:

\textbf{Question. }Does every knot $K$ bound a punctured $\#^{3}\R\P^{2}$
smoothly embedded in $B^{4}$?%
\footnote{Gilmer and Livingston \cite{Gilmer:2011tf} use Casson-Gordon invariants
to construct a family of knots $K_{n}$ such that $K_{n}$ does not
bound a nonorientable \emph{ribbon} surface in $B^{4}$ of genus less
than $n$.%
}

The answer is, perhaps unsurprisingly, {}``no.''
\begin{thm}
\label{thm:main theorem}Suppose that $K\subset S^{3}$ bounds a smoothly
embedded, nonorientable surface $F\subset B^{4}$. Then\[
b_{1}(F)\geq\frac{\sigma(K)}{2}-d\left(S_{-1}^{3}(K)\right),\]
where $\sigma$ denotes the Murasugi signature and $d$ the Heegaard-Floer
$d$-invariant of the integer homology sphere given by $-1$ surgery
on $K$.
\end{thm}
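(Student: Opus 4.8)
The plan is to feed $F$ into the Ozsvath--Szabo $d$-invariant obstruction: for a smooth negative-definite $4$-manifold $X$ with $\del X$ an integral homology sphere and a $\spinc$ structure $\s$, one has $c_{1}(\s)^{2}+b_{2}(X)\le 4\,d(\del X)$, and I want to build such an $X$ with $\del X=S_{-1}^{3}(K)$, with $b_{2}(X)$ controlled by $b_{1}(F)$ and with a $\spinc$ structure whose $c_{1}^{2}$ is controlled by $\sigma(K)$. The construction begins by capping $F$: push $F$ into the interior of $B^{4}$ and attach a $-1$-framed $2$-handle $h$ along $K\subset S^{3}=\del B^{4}$, obtaining $W=B^{4}\cup_{K}h$ with $H_{1}(W)=0$, $b_{2}(W)=1$, intersection form $\langle-1\rangle$, and $\del W=S_{-1}^{3}(K)$, together with a closed, smoothly embedded nonorientable surface $\widehat{F}=F\cup(\text{core of }h)$ in the interior of $W$. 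One checks that $b_{1}(\widehat{F})=b_{1}(F)$, that $[\widehat{F}]$ is the nonzero class of $H_{2}(W;\Z/2)$, and that the normal Euler number satisfies $e(\widehat{F})=e(F)-1$, where $e(F)$ is the normal Euler number of the spanning surface $F$ taken against the Seifert framing of $K$.

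The crux is an adjunction-type inequality for a closed nonorientable surface $\widehat{F}$ sitting inside a negative-definite $W$ with integral homology sphere boundary: that $e(\widehat{F})+b_{1}(\widehat{F})$ is bounded below by an explicit affine function of $d(\del W)$. I would establish it by resolving the crosscaps of $\widehat{F}$ one at a time. Writing $b_{1}(\widehat{F})=2g+c$ with $c\ge1$ the number of crosscaps, each crosscap can be removed at the cost of one blow-up --- an ambient connected sum with $\cptbar$ that trades the $\R\P^{2}$-summand of $\widehat{F}$ for a disk tubed to the exceptional sphere --- so that after all $c$ resolutions one has a negative-definite $X$ with $\del X=S_{-1}^{3}(K)$, $b_{2}(X)=c+1$, and an \emph{orientable} surface $\Sigma\subset X$ of genus at most $g$, lying in the class $[\widehat{F}]$ corrected by the exceptional classes, with $[\Sigma]^{2}$ read off from $e(\widehat{F})$ and $c$. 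The class $[\Sigma]$ and its representing surface single out a $\spinc$ structure $\s$ on $X$ for which $c_{1}(\s)^{2}+b_{2}(X)$ is computed from $[\Sigma]^{2}$, $g$ and $c$; substituting into the $d$-invariant inequality gives the claimed lower bound on $e(\widehat{F})+b_{1}(\widehat{F})$, and any residual orientable genus is harmless.

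It remains to convert the normal Euler number into the signature, which is where $\sigma(K)$ finally appears. The Gordon-Litherland formula for the (possibly nonorientable) spanning surface $F$ states, with appropriate sign conventions, that $\sigma(K)=\operatorname{sign}(\mathcal{G}_{F})-\tfrac{1}{2}e(F)$, where $\mathcal{G}_{F}$ is the Gordon-Litherland symmetric pairing on $H_{1}(F)\cong\Z^{b_{1}(F)}$; since $|\operatorname{sign}(\mathcal{G}_{F})|\le b_{1}(F)$, this bounds $e(F)$ in terms of $\sigma(K)$ and $b_{1}(F)$. Feeding $e(F)=e(\widehat{F})+1$ and $b_{1}(F)=b_{1}(\widehat{F})$ into the inequality of the previous paragraph and eliminating the Euler number yields, after rearrangement and once the constants are matched, $b_{1}(F)\ge\tfrac{1}{2}\sigma(K)-d\bigl(S_{-1}^{3}(K)\bigr)$, as desired.

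I expect the main obstacle to lie in the crosscap-resolution step, precisely in keeping $X$ negative-definite. Resolving a crosscap is naturally done with a $\cptbar$ only when it is of one ``sign''; a crosscap of the opposite sign would call for a $\cpt$, which destroys negative-definiteness and with it the $d$-invariant obstruction, so one must either normalize $F$ beforehand (replacing a mismatched pair of crosscaps by orientable genus together with a framing change) or exhibit a re-embedding of $\widehat{F}$ for which all the required blow-ups are $\cptbar$. The second delicate point is the characteristic-vector bookkeeping: verifying that the element produced really is $c_{1}$ of a $\spinc$ structure, that it is Poincare-dual to the correct combination of the handle class and the exceptional classes, and that its square is exactly what the $e(F)$-versus-$\sigma(K)$ tally demands --- reconciling the $-1$-framing, the normal Euler number, and the Gordon-Litherland correction is the heart of that calculation.
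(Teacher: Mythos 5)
Your outline shares its endpoints with the paper --- cap $F$ off with a $-1$-framed $2$-handle, trade the nonorientable surface for an orientable one in a larger definite-ish manifold, invoke the Ozsv\'ath--Szab\'o $d$-invariant inequality, and close with Gordon--Litherland (that last step is exactly the paper's) --- but the ``crux'' step has a genuine gap. The inequality $c_{1}(\s)^{2}+b_{2}(X)\leq4d(\del X)$ for a negative-definite $X$ with homology-sphere boundary holds for \emph{every} $\spinc$ structure and contains no information about embedded surfaces. If $X$ is negative definite with $b_{2}(X)=c+1$ and (as your blow-up construction forces) diagonal intersection form $\langle-1\rangle^{c+1}$, then the maximum of $c_{1}(\s)^{2}$ over characteristic vectors is $-(c+1)$, and the inequality collapses to $d(S_{-1}^{3}(K))\geq0$ and nothing more. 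There is no $\spinc$ structure ``singled out by $[\Sigma]$'' whose square records the genus: adjunction is a theorem about closed manifolds, not a condition you may impose on $\s$, so $g$ simply never enters $c_{1}(\s)^{2}$. To make the surface matter one must delete $\nu(\Sigma)$ and apply the inequality to the resulting negative \emph{semi}-definite cobordism having the circle bundle $Y_{g,-n}$ as a boundary component, using the bottom-most correction terms $d_{b}(Y_{g,-n})$ computed by Ozsv\'ath--Szab\'o in their proof of the Thom conjecture; that excision is precisely where the paper's dependence on $g=(b_{1}(F)-1)/2$ comes from.

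A second, related inconsistency shows the route cannot be repaired by fixing the crosscap signs. The bound is only nontrivial when $e(F)$ is large and positive (Gordon--Litherland gives $\sigma(K)\leq e(F)/2+b_{1}(F)$ otherwise), in which case $\widehat{F}$ has normal Euler number $e(F)-1\gg0$. A resolved class $[\Sigma]=[\widehat{F}]-\sum a_{i}e_{i}$ has square $e(F)-1-\sum a_{i}^{2}$, and a crosscap resolution tubes the replacing disk to the exceptional sphere twice, so $c$ blow-ups lower the square by only $4c\leq4b_{1}(F)$; for the knots of interest ($e$ quadratic in $b_{1}$, as for $T_{2k,2k-1}$) the result still has positive square and cannot live in a negative-definite $X$ at all. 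The paper sidesteps this by putting the orientable replacement in the \emph{indefinite} manifold $S^{2}\times S^{2}\backslash B^{4}$, where a single Hopf pair of handles absorbs the entire M\"obius summand at once, letting $\Sigma$ have positive square $n=e(F)+1$; negative semi-definiteness is recovered only on the complement of $\nu(\Sigma)$. Finally, the parity normalization you would need is handled there by connect-summing $F$ with the standard $\R\P^{2}\subset S^{4}$ of Euler number $+2$, which shifts both sides of the target inequality equally.
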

In particular, we show
\begin{thm}
\label{thm:T-2k theorem}Any smoothly embedded surface $F\subset B^{4}$
bounding the torus knot $T_{2k,2k-1}$ has $b_{1}(F)\geq k-1$.
\end{thm}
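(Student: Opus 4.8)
The strategy is to apply Theorem \ref{thm:main theorem} directly to $K = T_{2k,2k-1}$. This requires computing two quantities: the Murasugi signature $\sigma(T_{2k,2k-1})$ and the Heegaard–Floer $d$-invariant $d(S^3_{-1}(T_{2k,2k-1}))$.

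For the signature, I would use a known formula for signatures of torus knots. For $T_{p,q}$ with $p,q$ coprime, the signature is roughly $-pq/2$ asymptotically; more precisely one has exact formulas (due to Brieskorn, or via the recursion of Gordon–Litherland–Murasugi). For $T_{2k,2k-1}$, $pq = 2k(2k-1)$, and I expect $\sigma(T_{2k,2k-1}) = -(2k^2 - 2k)$ or a value close to it — the key point being that $\sigma/2$ grows quadratically in $k$, like $-k^2$. I would extract the precise value from the recursion $\sigma(T_{a,b}) = \sigma(T_{a,b-a}) - \text{(correction)}$ or from the count of lattice points; since $T_{2k,2k-1}$ is alternating-adjacent in the torus-knot family, these are classical.

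For the $d$-invariant of $-1$-surgery, I would use the relationship between $d(S^3_{-1}(K))$ and the Heegaard–Floer correction terms / the $V_0$ or $H_i$ invariants of $K$, specializing to torus knots where the knot Floer homology is determined by the Alexander polynomial (torus knots are L-space knots). In particular, $d(S^3_{-1}(K)) = -2V_0(\overline{K})$ or an analogous expression; for $T_{2k,2k-1}$ this is again computable from the semigroup associated to the torus knot singularity, and I expect it to contribute a term that, combined with $\sigma/2$, yields exactly $k-1$.

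The main obstacle will be the bookkeeping: getting the constants exactly right so that $\frac{\sigma(T_{2k,2k-1})}{2} - d(S^3_{-1}(T_{2k,2k-1}))$ equals precisely $k-1$ and not $k-1 \pm O(1)$. Both $\sigma/2$ and $d$ are quadratic in $k$ individually, so there must be a near-perfect cancellation of the $k^2$ terms, leaving a linear remainder. Verifying this cancellation cleanly — ideally via a closed-form lattice-point or semigroup computation rather than an induction — is where the real care is needed. Finally, since $T_{2k,2k-1}$ visibly bounds a nonorientable surface with $b_1 = k-1$ (for instance, obtained by taking the Seifert-like algorithm on a suitable nonorientable spanning surface, or by an explicit band construction), the lower bound $b_1(F) \ge k-1$ is sharp, which gives the stated equality $\gamma_4(T_{2k,2k-1}) = k-1$ in the abstract.
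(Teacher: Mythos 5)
Your plan follows exactly the route the paper takes: apply Theorem \ref{thm:main theorem} to $T_{2k,2k-1}$, compute $\sigma$ from the Murasugi recursion, and compute $d(S^3_{-1})$ from the Alexander polynomial using the fact that torus knots admit lens space surgeries (your $V_0$/semigroup formulation is equivalent to the paper's use of $t_0=\sum_j ja_j$). The sharpness construction you sketch at the end (bands on the torus) is also the paper's.

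However, the entire content of this theorem is the ``bookkeeping'' you defer, and your provisional values do not check out. The correct signature is $\sigma(T_{2k,2k-1})=-(2k^2-2)$, not $-(2k^2-2k)$: writing $\sigma_k=\sigma(T_{-2k,2k-1})$, Murasugi's recursion gives $\sigma_k=4k-2+\sigma_{k-1}$, hence $\sigma_k=2k^2-2$. On the other side, the Alexander polynomial of $T_{2k,2k-1}$ yields $t_0=\sum_{j=1}^{k-1}(k-j)=\frac{k^2-k}{2}$, so $d\left(S^3_{-1}(T_{-2k,2k-1})\right)=2t_0=k^2-k$ (note the sign: $d(S^3_{-1})\geq 0$ always, so your $-2V_0(\overline{K})$ should be $+2V_0(\overline{K})$). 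The difference is then $(k^2-1)-(k^2-k)=k-1$ exactly; with your guessed signature you would instead get $(k^2-k)-(k^2-k)=0$ and the theorem would fail. One further point you omit: the bound is naturally obtained for the mirror $T_{-2k,2k-1}$ (which has positive signature), and one transfers it to $T_{2k,2k-1}$ by reflecting the surface. So the approach is right, but as written the proposal neither establishes the quadratic cancellation nor lands on the correct linear remainder.
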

The equivalent question in the topological category remains open.

\begin{figure}
\includegraphics[width=10cm]{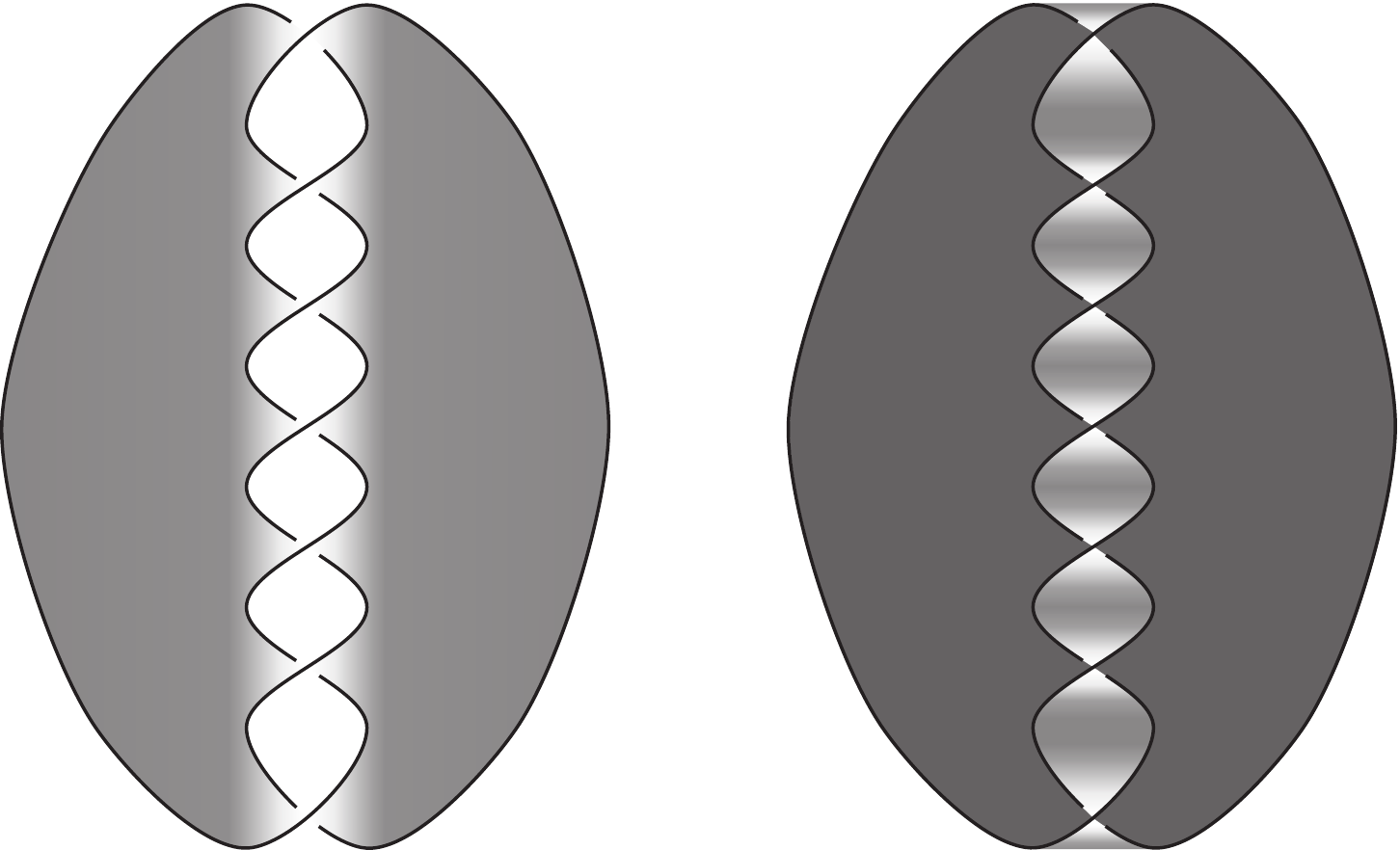}

\caption{An orientable surface with $b_{1}=k$ and a nonorientable surface
with $b_{1}=1$, each bounding $T_{2,2k+1}$. Here $k$ is $3$.}
\label{Flo:T_2k,2k+1}

\end{figure}

A moment for notation: the minimal genus of any surface bounding $K\subset S^{3}$
will be written $g_{3}(K)$, $g_{4}(K)$, $g_{4}^{top}(K)$, $\gamma_{3}(K)$,
$\gamma_{4}(K)$, $\gamma_{4}^{top}(K)$ depending on whether we allow
orientable or nonorientable surfaces ($g$ vs. $\gamma$) embedded
in $S^{3}$ or $B^{4}$ ($3$ vs. $4$) smoothly or topologically
(no superscript vs. $top$). Thus for $K=11_{31}^{n}$, we have \[
2g_{3}(K)=6\qquad2g(K)=4\qquad2g_{4}^{top}(K)=2\qquad\gamma_{3}(K)=3\qquad\gamma_{4}(K)=\;?\qquad\gamma_{4}^{top}(K)=\;?\]
This definite value for $\gamma_{3}(K)$, also called the \emph{crosscap
genus}, is due to Burton and Ozlen, who use integer programming and
normal surface theory to construct a triangulation of the knot complement
and certify minimal surfaces in it. Geometric techniques can also
be used to exactly compute the nonorientable 3-genus of a torus knot
$T_{p,q}$--it turns out that $\gamma_{3}(K)$, much like $\sigma(K)$,
is a recursive, arithmetic function of $p$ and $q$ \cite{Teragaito:2004te}.
Algebraic or polynomial invariants bounding $\gamma_{3}$ have yet
to be found--the 4-dimensional results of this paper provide the only
general 3-dimensional bounds known to the author.

The first large lower bounds on $g_{4}(K)$ are due to Murasugi, who
proved that $2g_{4}(K)\geq\sigma(K)$. The failure of this inequality
if we replace $2g$ with $\gamma$ is illuminating, so we quickly
recall a proof of the signature bound due to Gordon and Litherland.
Let $(F,\del F)\hookrightarrow(B^{4},S^{3})$ be an embedded surface
bounding a knot $K$. The normal bundle $\nu(F)$ always admits a
nonvanishing section $s$. On the boundary, $s\vert_{\del F}$ provides
a framing of $K$, which we use to define the \emph{normal Euler number}
of $F$:\[
e(F):=-\mbox{lk}(K,s(K)).\]
Gluing an orientable Seifert surface $\Sigma$ for $K$ to $F$ gives
a closed surface in $B^{4}$ with self-intersection $e(F)$. If $F$
is orientable, then $F\cup\Sigma$ represents an integral homology
class and self-intersection can be computed algebro-topologically;
since $H_{2}(B^{4};\Z)\cong0$, $e(F)$ must be zero. If $F$ is nonorientable,
then we must compute self-intersection geometrically. Take a transverse
pushoff of $F$, and choose arbitrary orientations in the neighborhood
of each intersection point. Together with the orientation of $B^{4}$,
this allows us to assign signs to each intersection; the sum turns
out to be independent of the choice of pushoff and local orientation.
It must be even, since we may compute self-intersection algebraically
over $\Z/2$, but it needn't be zero. For example, the Mobius band
bounding $T_{2,n}$ has normal Euler number $-2n$.

Let $W(F)$ denote the double cover of $B^{4}$ branched over $F$.
Gordon and Litherland \cite{Anonymous:2006wv} use the $G$-signature
theorem to show that the quantity \[
\sigma(W(F))+\frac{e(F)}{2}\]
is independent of the choice of surface $F$ bounding $K$, and equal
to the knot signature $\sigma(K)$. For any such $F$, then, \[
\left|\sigma(K)-\frac{e(F)}{2}\right|=\left|\sigma(W(F))\right|\leq b_{2}(W(F))=b_{1}(F),\]
where the final equality can be proved using elementary algebraic
topology.

This inequality is tight for both of the surfaces bounding $T_{2,2k+1}$
in Figure \ref{Flo:T_2k,2k+1}. The Seifert surface has $e(F)=0$
and $b_{1}(F)=2k$, the Mobius band has $e(F)=-2(2k+1)$ and $b_{1}(F)=1$,
and $\sigma(T_{2k,2k+1})=-2k$. In light of the important role played
by $e(F)$, it may be clarifying to sort surfaces based on the framing
they induce on the knot, and try to compute \[
\gamma_{4}(K,n):=\min\left\{ \gamma(F)\vert(F,\del F)\hookrightarrow(B^{4},K)\mbox{ and }e(F)=2n\right\} .\]
The signature inequality, in this notation, is $\gamma_{4}(K,n)\geq\left|\sigma(K)-n\right|$.

The strategy of this paper is as follows. First, we replace our nonorientable
surface in $B^{4}$ with an orientable surface in another manifold:
\begin{prop}
\label{pro:orientable replacement}Let $F\subset B^{4}$ be a smoothly
embedded nonorientable surface with odd $b_{1}$ bounding a knot $K\subset S^{3}$.
Then there exists an orientable surface $F^{\prime}\subset S^{2}\times S^{2}\backslash B^{4}$
which still bounds $K$, and has $b_{1}(F^{\prime})=b_{1}(F)-1$ and
$e(F^{\prime})=e(F)+2$.
\end{prop}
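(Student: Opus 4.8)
The plan is to isolate the non-orientability of $F$ at a single crosscap, and then trade that crosscap for an orientable cap by performing a surgery on $B^{4}$ along the core of the crosscap. Since $F$ is non-orientable with $b_{1}(F)$ odd, the classification of surfaces lets us find an embedded, orientation-reversing simple closed curve $\gamma\subset F$ such that $F\smallsetminus\nu_{F}(\gamma)$ is orientable; equivalently, $F$ is a once-punctured genus-$g$ surface with a single crosscap attached along $\gamma$, where $g=(b_{1}(F)-1)/2$. (This is exactly where the parity hypothesis is used: for even $b_{1}$ no single crosscap removal can orientate $F$.) Write $M=\nu_{F}(\gamma)$ for this Möbius band; its boundary $c=\del M$ is a $2$-cable of $\gamma$ lying on $\del\nu_{B^{4}}(\gamma)\cong S^{1}\times S^{2}$, and $e(F)$ decomposes as a relative Euler number of $F\smallsetminus M$ plus a contribution from $M$.

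The main step is the surgery. Remove $\nu_{B^{4}}(\gamma)\cong S^{1}\times D^{3}$ from $B^{4}$ and glue back $D^{2}\times S^{2}$ using the framing $\tau$ of $\gamma$ (a trivialization of the rank-three normal bundle, so one of two choices in $\pi_{1}(SO(3))\cong\Z/2$) which is dictated by $M$: namely the one for which the curve $c$ caps off by an embedded orientable disk $D$ inside the new $D^{2}\times S^{2}$, disjoint from the untouched piece $F\smallsetminus M$. Two things must be checked about this $\tau$: first, that the resulting closed-up manifold is $S^{2}\times S^{2}\smallsetminus B^{4}$ rather than $(\cpt\#\cptbar)\smallsetminus B^{4}$ (the Euler characteristic $\chi=3$ comes out automatically; the issue is distinguishing the two framings by their effect on the intersection form); and second, that $c$ really does bound an embedded disk $D$ in $D^{2}\times S^{2}$ compatibly with the normal section. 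Granting this, set $F'=(F\smallsetminus\mathrm{int}\,M)\cup D\subset S^{2}\times S^{2}\smallsetminus B^{4}$, a surface still bounding $K$.

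The bookkeeping is then routine. Orientability of $F'$ follows because $F\smallsetminus M$ was orientable and $D$ is a disk; since $\chi(M)=0$ while $\chi(D)=1$, we get $\chi(F')=\chi(F)+1$, hence $b_{1}(F')=b_{1}(F)-1$. For the Euler number, extend the nonvanishing section $s$ of $\nu F$ (used to define $e(F)$) across $\nu D$ using the framing $\tau$; the difference $e(F')-e(F)$ is then a local linking-number contribution supported near $\gamma$, and a direct computation in the model $S^{1}\times D^{3}\supset M$ evaluates it to $+2$.

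The hard part is the interlocking of the choices in the middle paragraph: one must verify that the single framing $\tau$ which caps $M$ by an embedded orientable disk is simultaneously the one producing $S^{2}\times S^{2}$ (not $\cpt\#\cptbar$) and the one forcing the Euler-number contribution to be $+2$ rather than $-2$. This reduces to a careful analysis of the standard local picture of a Möbius band inside $S^{1}\times D^{3}\subset B^{4}$, keeping track of orientations of the glued-in $S^{2}$-factor relative to the ambient orientation and to the pushoff defining $e$; everything else is general position and Euler-characteristic counting.
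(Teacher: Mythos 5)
Your strategy is genuinely different from the paper's: you surger $B^{4}$ along the core circle $\gamma$ of the crosscap (removing $S^{1}\times D^{3}$ and regluing $D^{2}\times S^{2}$), whereas the paper first constructs an embedded disk $D\subset B^{4}$ with $\del D=\gamma$ (by resolving the clasp and ribbon singularities of an immersed disk in a level sphere, pushing sheets to different radial levels), excises a ball neighborhood $N$ of that disk, identifies $F\cap\del N$ as an explicit link $L$ (a $(2,2k+1)$-torus knot together with unknots coming from $F\cap\mathrm{int}(D)$), and then exhibits concrete disjoint embedded disks bounding $L$ in $S^{2}\times S^{2}\backslash B^{4}$, built from parallel copies of the cores of the two $2$-handles joined by twisted bands. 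The payoff of the paper's explicitness is that the homology class of the new surface, and hence the framing shift $e(F')=e(F)+2$ and the identification of the ambient manifold, can be read off directly.

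The problem with your write-up is that the step you yourself flag as ``the hard part'' is exactly where the content of the proposition lives, and you have not carried it out. Three specific issues. First, the framing $\tau$ is not ``dictated by $M$'' in the way you assert: for either of the two framings the curve $c$ (a $(2,\mathrm{odd})$-cable in $S^{1}\times S^{2}$) can be capped by an embedded disk in $D^{2}\times S^{2}$, and the framing such a disk induces on $c$ is not unique --- it varies by multiples of $4$ with the class of the disk in $H_{2}(D^{2}\times S^{2},\del)$ --- so the selection principle you state does not pick out a well-defined $\tau$. Second, whether the surgery yields $S^{2}\times S^{2}$ or $\cpt\#\cptbar$ is precisely the $\Z/2$ of framings, and you give no argument that the framing compatible with an Euler-number shift of $+2$ is the $S^{2}\times S^{2}$ one; this depends on the (arbitrary odd) number of half-twists of the M\"obius band $M$, and resolving it requires either the explicit band/handle picture the paper draws or an appeal to the mod-$4$ congruence on $e(F)$, neither of which you supply. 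Third, the distinction is not cosmetic: the subsequent $d$-invariant argument uses the specific intersection form of $S^{2}\times S^{2}$ with a $-1$-framed handle attached, so landing in $\cpt\#\cptbar$ would break the rest of the paper. Until the ``careful analysis of the standard local picture'' is actually performed, the proof is incomplete at its crux.
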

The construction is similar to one in \cite{Yasuhara:1996tu}.

We then attach a $-1$-framed $2$-handle along $K$ to get a four-manifold
$W$, with boundary $S_{-1}^{3}(K)$. There is a closed, orientable
surface $\Sigma$ in $W$, formed by union of $F^{\prime}$ and the
core of the $2$-handle. By excizing a neighborhood of $\Sigma$ from
$W$, we get a negative semi-definite cobordism from a circle bundle
over $\Sigma$ to $S_{-1}^{3}(K)$. The definiteness of $W$ gives
us an inequality between the Heegaard-Floer $d$-invariants of its
two boundaries, ultimately yielding:
\begin{thm}
\label{thm:d-invt theorem}Suppose that $K\subset S^{3}$ bounds a
smoothly embedded, nonorientable surface $F\subset B^{4}$. Then \[
\frac{e(F)}{2}\leq2d\left(S_{-1}^{3}(K)\right)+b_{1}(F).\]
That is, $\gamma_{4}(K,n)\geq n-2d(S_{-1}^{3}(K))$.
\end{thm}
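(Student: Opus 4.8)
The plan is to replace the nonorientable surface $F$ by an orientable one sitting inside a negative semi-definite cobordism, and then feed that cobordism into the Ozsvath-Szabo inequality for the $d$-invariant. First I would dispose of two normalizations. Connect-summing $F$ with a standardly embedded $\R\P^{2}$ of normal Euler number $+2$ contained in a small ball in $B^{4}$ produces a nonorientable surface $\tilde F$ still bounding $K$ with $b_{1}(\tilde F)=b_{1}(F)+1$ and $e(\tilde F)=e(F)+2$, and the asserted inequality holds for $F$ if and only if it holds for $\tilde F$; iterating this, we may assume $b_{1}(F)$ is odd and $e(F)\ge 0$. Then Proposition \ref{pro:orientable replacement} produces an orientable $F'\subset S^{2}\times S^{2}\setminus B^{4}$ with $\del F'=K$, $b_{1}(F')=b_{1}(F)-1$, and $e(F')=e(F)+2$.

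Next I would build the cobordism. Attaching a $-1$-framed $2$-handle $h$ along $K\subset S^{3}=\del(S^{2}\times S^{2}\setminus B^{4})$ gives a four-manifold $W$ with $\del W=S_{-1}^{3}(K)$, and $\Sigma:=F'\cup(\text{core of }h)$ is a closed orientable surface of genus $g=(b_{1}(F)-1)/2$. A framing count gives $[\Sigma]^{2}=e(F')-1=e(F)+1$, which is positive by the normalization; the intersection form of $W$ is the hyperbolic form on $\Z^{2}$ (from $S^{2}\times S^{2}$) together with a single $\langle-1\rangle$ summand (from $h$), so $b_{2}^{+}(W)=1$. Since $[\Sigma]^{2}>0$, the class $[\Sigma]$ rationally spans the positive part of $H_{2}(W)$, so removing an open tubular neighborhood yields a negative semi-definite cobordism $Z:=W\setminus\mathrm{int}\,\nu(\Sigma)$, with $b_{2}^{-}(Z)=2$, from the circle bundle $Y_{\Sigma}$ over $\Sigma$ with Euler number $e(F)+1$ to $S_{-1}^{3}(K)$; pinning down $b_{2}^{-}(Z)$ and keeping track of $b_{1}(Z)$ is a Mayer--Vietoris computation.

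Finally I would apply the Heegaard--Floer machinery. For a suitable $\spinc$ structure $\mathfrak{s}$ on $Z$, the Ozsvath-Szabo inequality for negative semi-definite cobordisms gives
\[
d\bigl(S_{-1}^{3}(K)\bigr)\ \ge\ d\bigl(Y_{\Sigma},\mathfrak{s}|_{Y_{\Sigma}}\bigr)+\frac{c_{1}(\mathfrak{s})^{2}+b_{2}^{-}(Z)}{4},
\]
modulo the correction terms forced by $b_{1}(Y_{\Sigma})=2g$. The $d$-invariants of a circle bundle over a genus-$g$ surface are known in closed form, so I would choose $\mathfrak{s}$ so that $\mathfrak{s}|_{Y_{\Sigma}}$ attains the optimal such value while $c_{1}(\mathfrak{s})^{2}$ is as large as the adjunction-type constraint coming from $\Sigma$ allows, and then optimize over $\mathfrak{s}$; using $e(F)=[\Sigma]^{2}-1$ and $b_{1}(F)=2g+1$, the resulting inequality should rearrange exactly to $\tfrac{e(F)}{2}\le 2d(S_{-1}^{3}(K))+b_{1}(F)$. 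Undoing the two normalizations gives the statement for the original $F$, and the reformulation $\gamma_{4}(K,n)\ge n-2d(S_{-1}^{3}(K))$ is then immediate. The genuinely delicate point is this last step: one needs the cobordism $d$-invariant inequality in a form valid when an end has positive first Betti number, the explicit $d$-invariants of circle bundles over higher-genus surfaces, and a careful choice of $\spinc$ structure together with the attendant grading and sign bookkeeping, so that after optimization the bound is sharp (namely $2d+b_{1}$) rather than merely an inequality of the right shape. By comparison the topological inputs---the stabilization reduction, the computation of $[\Sigma]^{2}$, the intersection form of $W$, and the semi-definiteness of $Z$---are routine, with only the homological analysis of $Z$ requiring care.
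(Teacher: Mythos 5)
Your plan reproduces the paper's proof step for step: stabilize by connect-summing with a standard $\R\P^{2}$ of Euler number $+2$ to make $b_{1}$ odd (the paper does not also normalize $e(F)\geq0$, but that is harmless), invoke Proposition \ref{pro:orientable replacement}, attach the $-1$-framed handle, cap off $F^{\prime}$ to a closed genus-$g$ surface $\Sigma$ with $[\Sigma]^{2}=e(F)+1$, delete $\nu(\Sigma)$ to get a negative semi-definite cobordism with $b_{2}^{-}=2$ to $S_{-1}^{3}(K)$, and feed it to Proposition \ref{pro:d-invt-inequality}. The deferred arithmetic does close up as you hope: the paper takes $\s_{t}$ with $PD(c_{1}(\s_{t}))=(\pm1,2,2a)$, so that $\langle c_{1}(\s_{t}),[\Sigma]\rangle=n-2g$ and $c_{1}(\s_{t}\vert_{W})^{2}=-2\pm2-4g^{2}/n$, and combines this with $d_{b}(Y_{g,-n})=\frac{1}{4}-\frac{g^{2}}{n}-\frac{n}{4}$ to land exactly on $\frac{e(F)}{2}\leq2d(S_{-1}^{3}(K))+b_{1}(F)$.

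There is one regime your plan as written does not cover. The closed-form correction term of the circle bundle that you intend to use is only valid when the Euler number exceeds $2g$, i.e.\ when $e(F)+2>b_{1}(F)$; your normalization $e(F)\geq0$ does not guarantee this (small $e(F)$, large genus). For $0<n\leq2g$ the bottom correction term of $Y_{g,-n}$ is not given by that formula, and running the cobordism inequality there would not obviously ``rearrange exactly'' to the stated bound. The paper's fix is to not run the argument there at all: Proposition \ref{pro:d-invt-inequality} applied to the surgery cobordism from $S^{3}$ gives $d(S_{-1}^{3}(K))\geq0$, and when $e(F)+2\leq b_{1}(F)$ one has $\frac{e(F)}{2}<b_{1}(F)\leq2d(S_{-1}^{3}(K))+b_{1}(F)$, so the theorem is vacuously true in that regime. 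You need this (or an equivalent) case distinction; with it, your outline is the paper's proof.
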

Combining this theorem with the signature inequality yields Theorem
\ref{thm:main theorem}, which can be written as \[
\gamma_{4}(K)\geq\frac{\sigma(K)}{2}-d\left(S_{-1}^{3}(K)\right).\]

The $d$-invariants of integer homology spheres are in general somewhat
difficult to compute, though $d(S_{-1}^{3}(K))$ can in general be
calculated from the filtered Heegaard Floer knot complex $CFK^{\infty}(K)$
\cite{PETERS:ts}. When $K$ admits a lens space surgery, however,
these $d$-invariants can be read off from the Alexander polynomial
of $K$. Using a recursive formula of Murasugi to calculate the signature
of torus knots, we are able to prove Theorem \ref{thm:T-2k theorem},
that $\gamma_{4}(T_{2k,2k-1})\geq k-1$. In fact, we can construct
a surface $F_{2k,2k-1}$ bounding $T_{2k,2k-1}$ for which equality
holds.
\begin{prop}
The torus knot $T_{2k,2k-1}$ has $\gamma_{4}(T_{2k,2k-1})=k-1$.
That is, $T_{2k,2k-1}$ does not bound a punctured $\#^{k-2}\R\P^{2}$
smoothly embedded in $B^{4}$, and does bound a punctured $\#^{k-1}\R\P^{2}$.
\end{prop}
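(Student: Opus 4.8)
The inequality $\gamma_{4}(T_{2k,2k-1})\geq k-1$ is exactly Theorem \ref{thm:T-2k theorem}, so what remains is to exhibit, for each $k\geq 2$, a smoothly embedded nonorientable surface $F_{2k,2k-1}\subset B^{4}$ with $\partial F_{2k,2k-1}=T_{2k,2k-1}$ and $b_{1}(F_{2k,2k-1})=k-1$ (for $k=1$ the knot $T_{2,1}$ is the unknot, which bounds a disk). The plan is to build $F_{2k,2k-1}$ out of band (saddle) moves, one crosscap at a time.

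The key step I would establish is a single-band reduction inside the family: for every $k\geq 2$ there is an embedded band, attached \emph{incoherently} (with a half-twist), whose saddle move carries $T_{2k,2k-1}$ to $T_{2k-2,2k-3}$. The natural place to look for the band is on the standard Heegaard torus $\Sigma\subset S^{3}$ that carries $T_{2k,2k-1}$ as a $(2k,2k-1)$ curve; I expect it to be supported near a short arc of $\Sigma$ joining two neighbouring strands of the curve, equipped with a half-twist and with a framing chosen so that an explicit isotopy identifies the resulting knot with the $(2k-2,2k-3)$ curve. Granting the reduction, I would iterate it, $T_{2k,2k-1}\rightsquigarrow T_{2k-2,2k-3}\rightsquigarrow\cdots\rightsquigarrow T_{2,1}$, a chain of $k-1$ incoherent band moves terminating at the unknot; then cap the unknot with a disk $D$ and push the trace of the entire movie into the interior of $B^{4}$. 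The resulting $F_{2k,2k-1}$ is connected, has the single boundary circle $T_{2k,2k-1}$, and $\chi(F_{2k,2k-1})=\chi(D)-(k-1)=2-k$, so $b_{1}(F_{2k,2k-1})=k-1$ as wanted. (Equivalently, one could forgo the recursion and draw all $k-1$ bands on a single diagram of $T_{2k,2k-1}$.)

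That $F_{2k,2k-1}$ is nonorientable still needs a word. Each saddle above is along an incoherent band--a band surgery carrying a knot to a knot necessarily is--so its trace is a once-punctured M\"obius band, and $F_{2k,2k-1}$ is obtained from $D$ by attaching $k-1$ crosscaps; indeed it is a punctured $\#^{k-1}\R\P^{2}$. This is in any case automatic: an orientable surface in $B^{4}$ bounding $T_{2k,2k-1}$ has vanishing normal Euler number, hence first Betti number at least $|\sigma(T_{2k,2k-1})|$ by the Gordon--Litherland inequality recalled in the introduction, and $|\sigma(T_{2k,2k-1})|>k-1$. As a consistency check one can also compute the normal Euler number $e(F_{2k,2k-1})$ and verify that the signature bound $\gamma_{4}(K,n)\geq|\sigma(K)-n|$ and the bound $\gamma_{4}(K,n)\geq n-2d(S_{-1}^{3}(K))$ of Theorem \ref{thm:d-invt theorem} are each sharp for $F_{2k,2k-1}$, i.e.\ attained at $n=e(F_{2k,2k-1})/2$.

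Together with Theorem \ref{thm:T-2k theorem} this yields $\gamma_{4}(T_{2k,2k-1})=k-1$. Since a nonorientable surface with a single boundary circle and first Betti number $r$ is homeomorphic to $\#^{r}\R\P^{2}$ with an open disk removed, this equality says precisely that $T_{2k,2k-1}$ bounds a punctured $\#^{k-1}\R\P^{2}$ smoothly in $B^{4}$ but no punctured $\#^{k-2}\R\P^{2}$ (that surface having first Betti number $k-2<k-1$). The one genuinely nontrivial ingredient is the single-band reduction $T_{2k,2k-1}\rightsquigarrow T_{2k-2,2k-3}$: finding the arc, and checking by an explicit sequence of Reidemeister moves that the incoherent saddle along it lands exactly on the next torus knot--in particular that the band carries precisely the right framing. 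Once the picture is in hand this is routine, but locating it is where the effort goes.
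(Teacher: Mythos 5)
Your proposal is correct and follows essentially the same route as the paper: the lower bound is Theorem \ref{thm:T-2k theorem}, and the upper bound comes from iterating an incoherent band (saddle) move carrying $T_{2k,2k-1}$ to $T_{2k-2,2k-3}$ and capping the final unknot $T_{2,1}$ with a disk, for a total of $b_{1}=k-1$. The one step you defer to an explicit Reidemeister-move check the paper dispatches more cleanly: since the band is attached along an arc of the standard torus, the resulting knot still lies on that torus and is therefore the torus knot determined by its class in $H_{1}(T^{2})$, namely $T_{p-2t,q-2h}$ with $t\equiv-q^{-1}\bmod p$ and $h\equiv p^{-1}\bmod q$, which for $(p,q)=(2k,2k-1)$ gives exactly $(2k-2,2k-3)$.
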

The surface $F_{2k,2k-1}$ is an example of a more general construction.
For each relatively prime $p$ and $q$, we find a nonorientable surface
$F_{p,q}$ in $B^{4}$ bounding $T_{p,q}$, whose first Betti number
satisfies the recursion $b_{1}(F_{p,q})=b_{1}(F_{p-2t,q-2h})+1$ where
$t$ and $h$ are the minimal nonnegative representatives of $-q^{-1}$
modulo $p$ and $p^{-1}$ modulo $q$, respectively. We conjecture
that these surfaces always have minimal genus, ie, that $b_{1}(F_{p,q})=\gamma_{4}(T_{p,q})$.

In contrast to the orientable case, where the so-called Milnor conjecture
$g_{4}(T_{p,q})=g_{3}(T_{p,q})$ holds, we show that $\gamma_{4}(T_{4,3})=1$
while Teregaito has computed that $\gamma_{3}(T_{4,3})=2$ \cite{Teragaito:2004te}.

\textbf{Acknowledgements. }The author would like to thank Peter Ozsváth
for many helpful conversations, and Jacob Rasmussen for comments on
a draft.

\section{Constructing an orientable replacement}

In this section, we prove
\begin{prop*}
[\bf\ref{pro:orientable replacement}]Let $F\subset B^{4}$ be a
smoothly embedded nonorientable surface with odd $b_{1}$ bounding
a knot $K\subset S^{3}$. Then there exists an orientable surface
$F^{\prime}\subset S^{2}\times S^{2}\backslash B^{4}$ which still
bounds $K$, and has $b_{1}(F^{\prime})=b_{1}(F)-1$ and $e(F^{\prime})=e(F)+2$.\end{prop*}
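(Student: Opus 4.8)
The plan is to perform a local surgery on $F$ near a point where it fails to be orientable, trading one "crosscap" for a handle glued into an ambient connected sum with $S^2\times S^2$. Concretely, since $b_1(F)$ is odd, $F$ contains an embedded M\"obius band; let $\mu\subset F$ be its core circle, an orientation-reversing curve. A tubular neighborhood of $\mu$ in $B^4$ is a $D^3$-bundle over $S^1$, and the normal data of $F$ along $\mu$ is captured by the way the M\"obius band sits inside it. The first step is to excise a small ball $B$ meeting $F$ in this standard M\"obius-band-in-$D^3\tilde\times S^1$ model, and to understand what $F\cap \del B$ looks like: it is a single unknotted circle in $S^3 = \del B$ (the boundary of the M\"obius band), with a specific framing determined by $e$ of the M\"obius piece, namely $\pm 2$.

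Next I would build the replacement piece. Inside $S^2\times S^2\setminus B^4$ I want to find an orientable surface with one boundary circle, bounding that same framed unknot, with $b_1=0$ (a disk) or at least with one less handle than the M\"obius band it replaces, and whose normal Euler contribution exceeds that of the M\"obius band by $2$. The natural candidate: $S^2\times S^2$ minus a ball is a negative-definite... no---$S^2\times S^2$ is indefinite, and the point of using it rather than $\cpt$ or $\cptbar$ is presumably to keep everything orientable and to have both a $+1$ and a $-1$ sphere available. I would take one of the $S^2$ factors (or an embedded sphere in the class $[S^2\times pt]\pm[pt\times S^2]$, which has self-intersection $0$ or $\pm 2$), puncture it, and tube the resulting disk-with-boundary onto the piece of $F^\prime$ coming from $F\setminus B$. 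Tubing along an arc lets me cancel the M\"obius band's nonorientability at the cost of using up the sphere; the sphere's self-intersection is exactly what shifts $e$ by $+2$. Since we removed a M\"obius band ($b_1$ contribution $1$) and glued in something built from a sphere and a disk ($b_1$ contribution $0$), we get $b_1(F^\prime)=b_1(F)-1$.

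The key steps in order: (1) locate the M\"obius band in $F$ and put its neighborhood in a normal form inside a ball $B\subset B^4$; (2) identify $F\cap\del B$ as a framed unknot and compute its framing in terms of the local $e$-contribution; (3) construct in $S^2\times S^2\setminus B^4$ an orientable surface with that framed unknot as boundary, using a punctured $2$-sphere tubed in, and check it is orientable and that its boundary framing matches so that $F^\prime = (F\setminus B)\cup(\text{new piece})$ is a smooth closed-up surface bounding $K$; (4) add up $b_1$ (Mayer--Vietoris / additivity of first Betti number under gluing along a circle) and $e$ (additivity of normal Euler number under such gluing, which is why the $+2$ from the sphere lands where it should).

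The main obstacle I anticipate is step (2)--(3): pinning down the exact framing on the unknot $F\cap\del B$ and making sure the orientable cap in $S^2\times S^2\setminus B^4$ realizes precisely that framing with precisely the Euler-number jump of $+2$ and no extra $b_1$. Getting the signs and the factor of $2$ right (the M\"obius band contributes $e=\mp 2$ locally, the embedded $2$-sphere contributes $\pm 2$, and these must conspire so that $e(F^\prime)=e(F)+2$ rather than $e(F)-2$ or $e(F)\pm 4$) is exactly the delicate bookkeeping, and is presumably where the choice of $S^2\times S^2$ (with its $0$- and $\pm2$-framed spheres) over a definite manifold is forced. The rest is standard: orientability after the surgery is automatic once the orientation-reversing core curve has been surgered out, and the Betti-number and Euler-number computations are additivity statements for surfaces glued along a single circle.
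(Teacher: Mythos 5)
Your overall strategy is the right one --- cut out the nonorientable piece and cap it off with an orientable surface in $S^{2}\times S^{2}\setminus B^{4}$ --- but the local model you propose does not exist, and this is exactly where the real work of the proof lies. A tubular neighborhood of the core circle $\mu$ is $S^{1}\widetilde{\times}D^{3}$, not a ball, so excising it does not leave an $S^{3}$ boundary along which to glue. To get a ball you must first produce an \emph{embedded disk} $D\subset B^{4}$ bounding $\mu$, and this is a genuine step: pushing $\mu$ onto an interior level sphere $S^{3}_{1/2}$, it bounds only an immersed disk with clasp and ribbon singularities, which have to be resolved by pushing sheets into the fourth dimension (both inward and outward --- this is where being in the interior of $B^{4}$, rather than on $\del B^{4}$, is used). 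Moreover the resulting $D$ will generically meet $F$ transversely in $l$ interior points, so the link $F\cap\del N$ on the boundary of the neighborhood $N$ of $D$ is not a single circle: it is a $(2,2k+1)$-torus knot together with an $l$-component unlink of meridians, and all $l+1$ components must be capped off by \emph{disjoint} disks in $S^{2}\times S^{2}\setminus B^{4}$. Your proposal accounts for neither the singularity resolution nor these extra intersection components.

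The second gap is the claim that $F\cap\del B$ is an unknot with framing $\pm2$, to be capped by a punctured sphere of square $0$ or $\pm2$. The number of half-twists in the M\"obius band is not controlled, so the relevant boundary component is a $(2,2k+1)$-torus knot with framing growing linearly in $k$, and the capping surface must lie in the homology class $(2,m)$ with $m=|k|+l$ arbitrary. This is why the hyperbolic form of $S^{2}\times S^{2}$ is needed (two dual $2$-handles whose cores can be taken in $|k|+l$ and $2$ parallel copies, banded together), rather than a single embedded sphere of small square; the normal Euler number of the cap is then forced to be $e(F\cap N)+2$ by comparing the band in the cap with the band in the M\"obius band, not by adding a fixed $\pm2$ from a sphere. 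Your final additivity computations for $b_{1}$ and $e$ are fine once the correct link and caps are in place, but as written the construction in steps (1)--(3) would not go through.
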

\begin{proof}
We break the proof into four steps.

\medskip{}

\emph{Step 1: There is an embedded disk $D\subset B^{4}$, with boundary
contained in $F$, such that $F\backslash\del D$ is orientable.}

\medskip{}

Since $F$ has odd $b_{1}$, it is diffeomorphic to a punctured orientable
surface boundary-connect summed with a Mobius band (Figure \ref{Flo:the surface F}).
Let $C\subset F$ be the core of the Mobius band; note that $F-C$
is orientable. After an ambient isotopy, we may arrange that $C$
lies in the sphere of radius $1/2$, $S_{1/2}^{3}\subset B^{4}$,
and that $F$ intersects $S_{1/2}^{3}$ transversly. Think of $C$
as a knot: it bounds some immersed disk $D^{2}$ in $S_{1/2}^{3}$,
with clasp and ribbon singularities (Figure \ref{Flo:clasp and ribbon singularities}).
We may remove the ribbon singularities by pushing the inner immersed
segment in towards the centre of the 4-ball. To remove the clasp singularities,
we push both immersed segments of the disk off the $1/2$-level, one
in towards the centre, and the other out towards the boundary. (The
ability to push the surface both inwards and outwards is crucial,
since a knot on the boundary of the $B^{4}$ bounds an embedded disk
only if it is slice.) By a small isotopy, we may arrange that this
embedded disk $D$ bounding $C$ intersects $F$ transversely on its
interior.

Let $N$ be a small regular neighborhood of $D$.

\begin{figure}
\includegraphics[width=7cm]{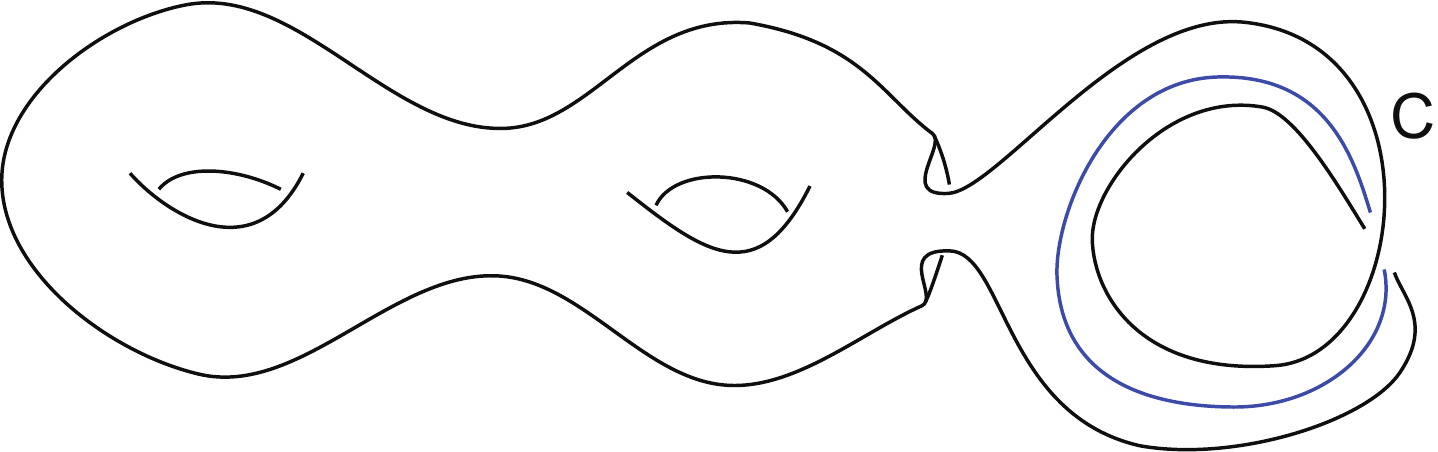}\caption{The surface $F$}
\label{Flo:the surface F}
\end{figure}
\begin{figure}
\includegraphics[width=8cm]{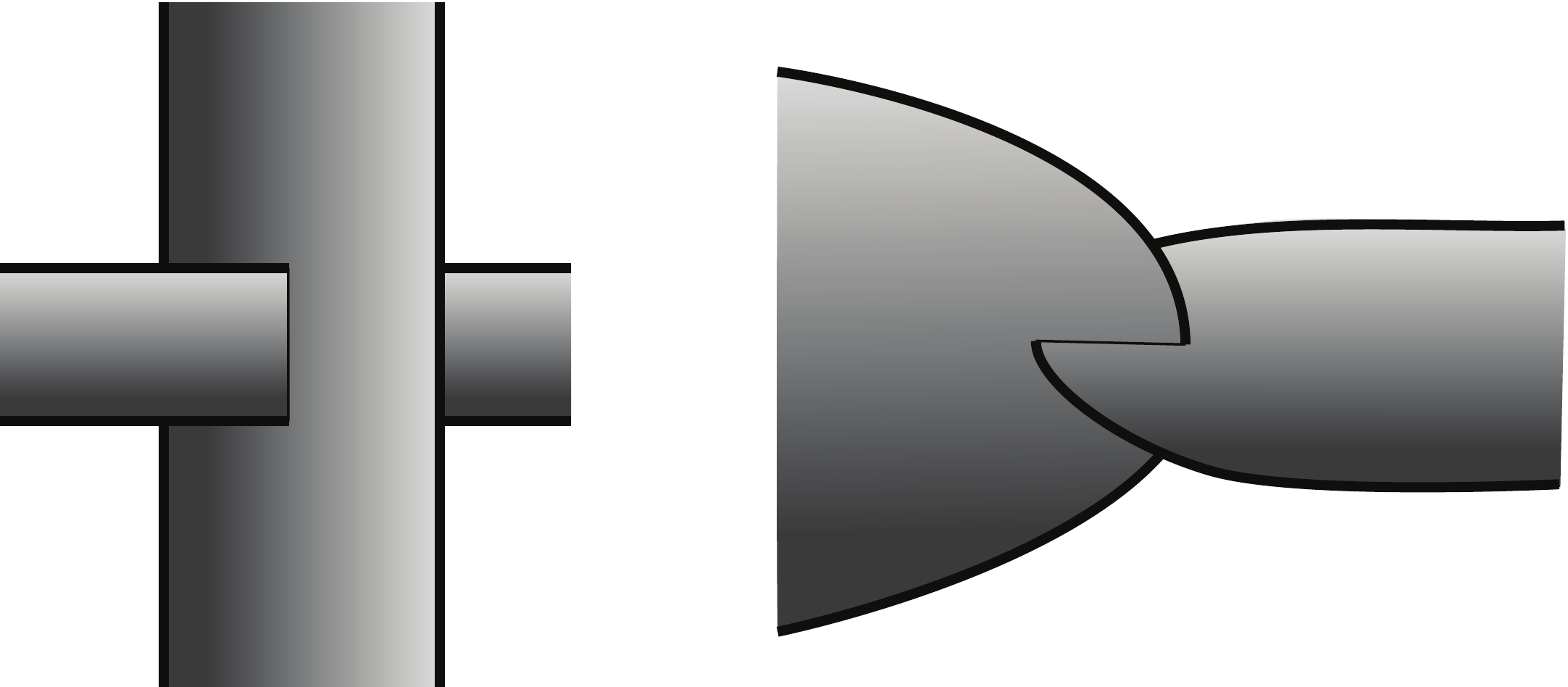}

\caption{Ribbon and Clasp singularities}
\label{Flo:clasp and ribbon singularities}
\end{figure}

\medskip{}

\emph{Step 2: The intersection $\del N\cap F$ is the link $L$ shown
in Figure \ref{Flo:F cap N}.}

\medskip{}

$N$ is diffeomorphic to $D\times D^{2}$, and intersects our surface
$F$ in a Mobius band (in the neighborhood of $\del D=C$) and a collection
of disks $pt\times D^{2}$ (neighborhoods of the transverse intersections
of $F$ with the interior of $D$). If we draw $S^{3}=\del N$ with
its standard decomposition into solid tori $S^{3}\cong S^{1}\times D^{2}\cup_{T^{2}}D^{2}\times S^{1}$,
we see $F\cap\del N$ as the link $L$ consisting of a $(2,2k+1)$-cable
of the core of the first factor, together with a collection of $l$
longitudes for the second. By construction, $L$ bounds a Mobius band
disjoint union a collection of $l$ disks in $N\cong B^{4}$.

\begin{figure}
\includegraphics[width=7cm]{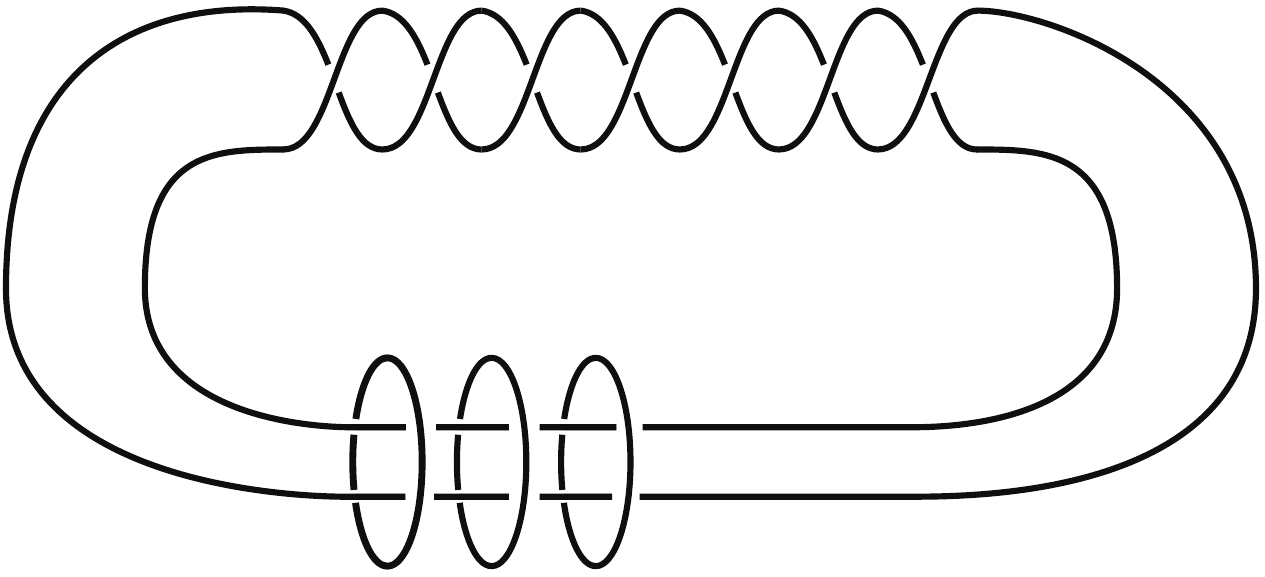}

\caption{Our surface $F$ intersects $\del N$ in a torus knot and an unlink}
\label{Flo:F cap N}
\end{figure}

\medskip{}

\emph{Step 3:} \emph{$L$ bounds $l+1$ disjoint embedded disks in
$S^{2}\times S^{2}\backslash B^{4}$}

\medskip{}

A handle decomposition for $S^{2}\times S^{2}\backslash B^{4}$ consists
of two zero-framed $2$-handles $H_{1}$ and $H_{2}$ attached along
a Hopf link in the boundary $S^{3}$, together with a $4$-handle.
To construct the slice disks for $L$, we begin with $\left|k\right|+l$
parallel copies of the core of $H_{2}$ and $2$ parallel copies of
the core of $H_{1}$--their boundaries form a multi-Hopf link, with
components $U_{1},\cdots,U_{\left|k\right|+l}$, $L_{1},L_{2}$, as
in the first frame of Figure \ref{Flo:building the new slice}. For
each $1\leq i\leq\left|k\right|$, connect $U_{i}$ to $L_{1}$ with
a twisted strip, and with one additional twisted strip, connect $V_{1}$
to $V_{2}$ . Call the surface consisting of the parallel cores and
the strips $E$, and note that the boundary of $E$ is isotopic to
$L$. Since each strip connects a distinct disk to $L_{1}$, $E$,
or rather a slightly isotoped copy of $E$, is a collection of $l+1$
disjoint embedded disks with boundary $L$.

\begin{figure}
\includegraphics[width=8cm]{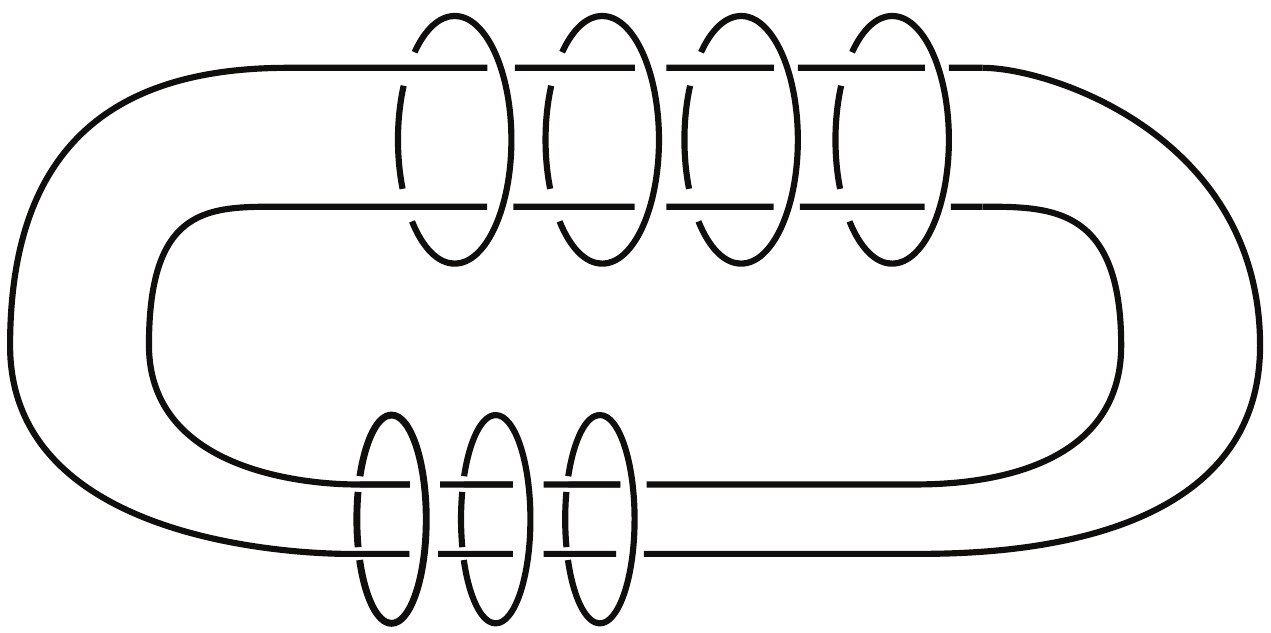}\includegraphics[width=8cm]{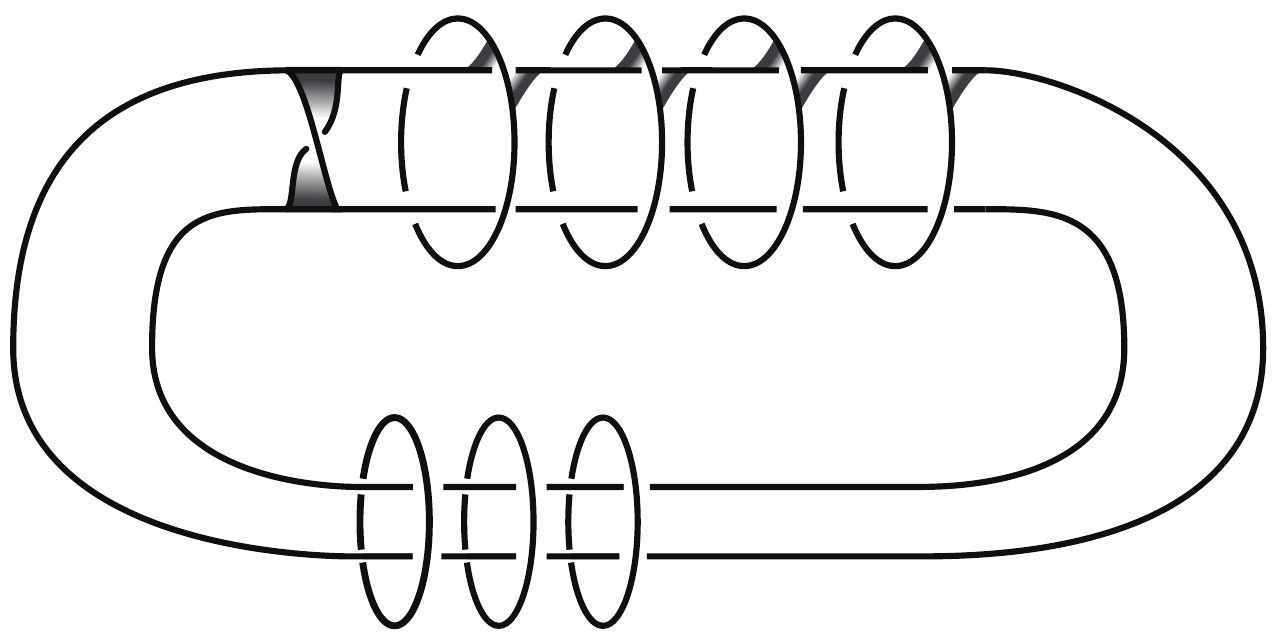}

\includegraphics[width=8cm]{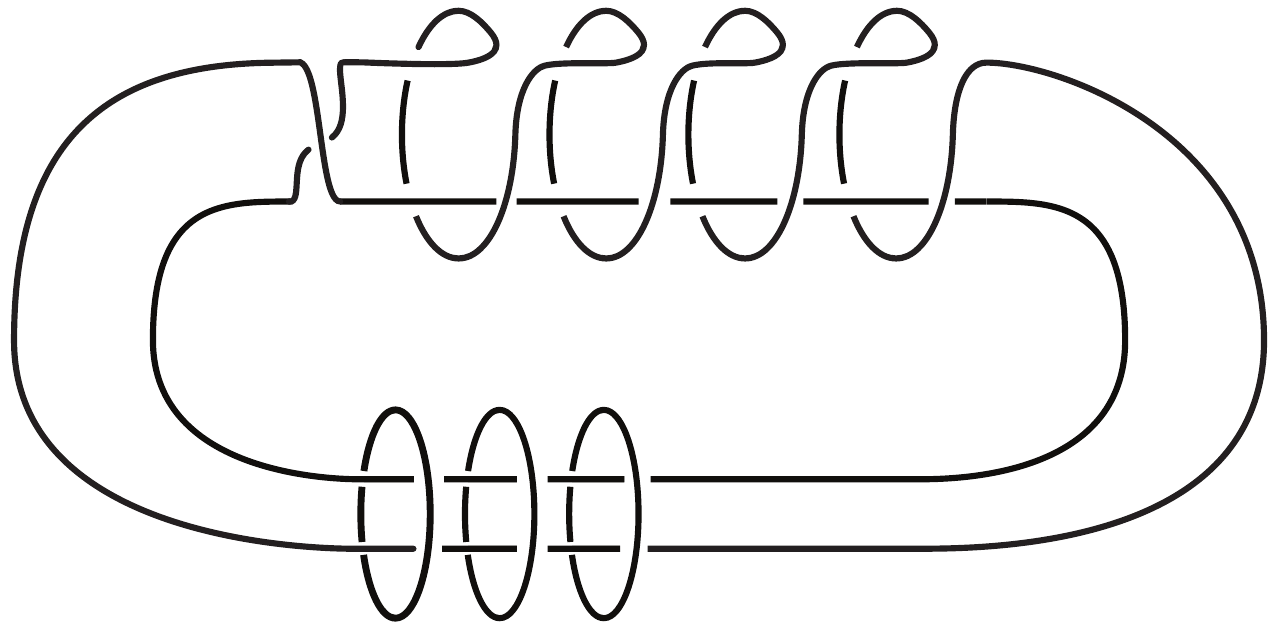}\includegraphics[width=8cm]{N_cap_F.pdf}\caption{For $k=-7$, $l=3$, we have drawn the multihopf link bounding a collection
of parallel disks, the strips which join them to form $E$, and the
boundary of $E$, which is isotopic to $L$.}

\label{Flo:building the new slice}
\end{figure}

\medskip{}

\emph{Step 4:} \emph{Construct $F^{\prime}$, and compute $b_{1}(F^{\prime})$
and $e(F^{\prime})$.}

\medskip{}

If we excize $N$ from $B^{4}$, we are left with an orientable surface
$F^{\prime\prime}\subset S^{3}\times[0,1]$, with boundary $K$ in
$S^{3}\times\{0\}$ and $L$ in $S^{3}\times\{1\}$. Attach $S^{2}\times S^{2}\backslash B^{4}$
along $S^{3}\times\{1\}$ to form a larger manifold, still diffeomorphic
to $S^{2}\times S^{2}\backslash B^{4}$. The slice disks $E$ for
$L$ combine with $F^{\prime\prime}$ to form an orientable surface
$F^{\prime}$, whose only remaining boundary is the original knot
$K$.

Since we have removed $l$ disks and an annulus from $F$, and replaced
them with $l+1$ disks, $b_{1}(F^{\prime})=b_{1}(F)-1$. It remains
to compare the normal Euler numbers. The remaining $l$ unknots, $U_{1},\dots,U_{l}$
have the same framing induced by $E$ and $F\cap N$. The torus knot
component of $L$ is bounded by a Mobius band in $F\cap N$, and by
an interesting disk in $E$. We invite the reader to verify that the
induced framings differ by $2$, due to the difference between the
vertical twisted strip connecting $V_{1}$ to $V_{2}$ in $E$ and
the horizontal one in Mobius band. That is, $e(E)=e(F\cap N)+2$.
Since Euler number, like any self-intersection, is additive, $e(F^{\prime})=e(F)+2$.
\end{proof}
For future reference, we note that the homology class $[F^{\prime}]\in H_{2}(S^{2}\times S^{2}\backslash B^{4})$
is $(2,m)$, in the basis given by $H_{1}$ and $H_{2}$, with $m=\left|k\right|+l$.
Since $F^{\prime}$ is orientable, its algebraic self-intersection
number, $4m$, must be equal to its geometric self-intersection number,
$e(F^{\prime})$.

\section{$d$-invariants}

Heegaard Floer homology associates to a $3$-manifold $Y$ equipped
with a $\spinc$ structure $\t$ a suite of $\Z[U]$-modules which
fit into a long exact sequence:\[
\cdots\rightarrow HF^{-}(Y,\t)\overset{i}{\rightarrow}HF^{\infty}(Y,\t)\overset{\pi}{\rightarrow}HF^{+}(Y,\t)\overset{\delta}{\rightarrow}HF^{-}(Y,\t)\rightarrow\cdots\]
If $c_{1}(\t)$ is torsion (in which case we also say that $\t$ is
torsion), then there is a $\Q$-grading $\mbox{gr}$ on the each of
these groups which is respected by $i$ and $\pi$. The action of
$U$ decreases grading by $2$. If $Y$ is a rational homology sphere,
then $HF^{\infty}(Y,\t)\cong\Z[U,U^{-1}]$, and every $\spinc$ structure
is torsion. In that case, the $d$-invariant (or \emph{correction
term}) $d(Y,\t)$ is the minimal grading of a non-$\Z$-torsion element
of $HF^{+}(Y,\t)$ in the image of $\pi$. 

If $b_{1}(Y)>0$, then there is an additional action of $H:=H_{1}(Y)/\mbox{Tors}$
on the $HF$ groups, which decreases grading by $1$. If for every
torsion $\t\in\spinc(Y)$, $HF^{\infty}(Y,\t)\cong\Z[U,U^{-1}]\otimes_{\Z}\Lambda^{*}H$,
then we say that $Y$ has standard $HF^{\infty}$. In that case, there
are many correction terms, one for each generator of $\Lambda^{*}H$.
We will be concerned with the bottom-most correction term, $d_{b}(Y,\t)$,
defined to be the minimal grading of a nontorsion element of $HF^{+}(Y,\t)$
in the image of $\pi$ \emph{and} in the kernel of the $H$-action.
The $d$-invariants terms will be useful to us because of their relationship
to definite cobordisms.
\begin{prop}
\cite{Ozsvath:2003ti} \label{pro:d-invt-inequality}Let $Y$ be a
closed oriented $3$-manifold (not necessarily connected) with standard
$HF^{\infty}$, endowed with a torsion $\spinc$ structure $\t$.
If $X$ is a negative semi-definite four-manifold bounding $Y$ such
that the restriction map $H^{1}(X;\Z)\rightarrow H^{1}(Y;\Z)$ is
trivial, and $\s$ is a $\spinc$ structure on $X$ restricting to
$\t$ on $Y$, then\[
c_{1}(\s)^{2}+b_{2}^{-}(X)\leq4d_{b}(Y,\t)+2b_{1}(Y).\]

\end{prop}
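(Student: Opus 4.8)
The plan is to deduce this from the behavior of the Heegaard Floer cobordism maps across a negative semi-definite cobordism. First I would assume $X$ is connected (otherwise argue componentwise) and remove an open four-ball from its interior, obtaining a cobordism $W$ from $S^3$ to $Y$ that is still negative semi-definite, with $b_2^+(W)=0$, $\sigma(W)=-b_2^-(X)$, and $\chi(W)=\chi(X)-1$. The $\spinc$ structure $\s$ restricts to $\t$ on $Y$ and to the unique structure on $S^3$, so $W$ induces a commuting ladder of homomorphisms $F^{\circ}_{W,\s}$, $\circ\in\{-,\infty,+\}$, intertwining the exact triangles of $S^{3}$ and of $(Y,\t)$, each shifting the $\Q$-grading by $\frac14\bigl(c_1(\s)^2-2\chi(W)-3\sigma(W)\bigr)$. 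Feeding in the identities above together with Poincar\'e--Lefschetz duality to express $\chi(X)$ in terms of $b_2^-(X)$ and the radical of the intersection form --- the triviality of $H^1(X;\Z)\to H^1(Y;\Z)$ being what pins down that radical in terms of $b_1(Y)$ --- a routine computation rewrites the grading shift so that the inequality to be proved becomes equivalent to the assertion that $F^{\circ}_{W,\s}$ is ``as nontrivial as possible'': that it carries the top-grading generator of $HF^{-}(S^{3})\cong\Z[U]$ to a $U$-nontorsion element of $HF^{-}(Y,\t)$ annihilated by the $H$-action.

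The heart of the matter is the nontriviality of $F^{\infty}_{W,\s}$. Here I would use handle calculus: decompose $W$ relative to $S^3$ into elementary cobordisms, where the one- and three-handle maps on $HF^{\infty}$ are understood and the two-handles are attached along a framed link whose linking matrix inherits negative semi-definiteness. Ozsv\'ath and Szab\'o's key computation is that a two-handle attached along a knot with nonpositive framing induces an isomorphism on $HF^{\infty}$ in the relevant $\spinc$ structure --- with the appropriate modification in the framing-zero case, which is exactly where the null-homologous classes of $H_2(X)$ enter --- so composing these shows $b_2^+(W)=0$ forces $F^{\infty}_{W,\s}$ to be injective, and that the triviality of $H^1(X;\Z)\to H^1(Y;\Z)$ lets one control how its image sits relative to the $H$-action on $HF^{\infty}(Y,\t)\cong\Z[U,U^{-1}]\otimes_{\Z}\Lambda^{*}H$. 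Because $HF^{-}(S^{3})\hookrightarrow HF^{\infty}(S^{3})$ and the ladder commutes with the natural map $i$, injectivity of $F^{\infty}_{W,\s}$ upgrades at once to the fact that $F^{-}_{W,\s}$ sends the generator of $HF^{-}(S^{3})$ to a $U$-nontorsion element of $HF^{-}(Y,\t)$; tracking gradings, that element lies in grading $-2$ plus the shift, which by the definition of the bottom-most correction term is at most $d_b(Y,\t)-2$, up to the grading width $b_1(Y)$ of the $\Lambda^{*}H$ factor. Substituting back gives $c_1(\s)^2+b_2^-(X)\le 4d_b(Y,\t)+2b_1(Y)$.

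The step I expect to be the main obstacle is precisely the nontriviality of $F^{\infty}_{W,\s}$ for a negative \emph{semi}-definite cobordism, together with the control of its interaction with the $H$-action. For a genuinely negative-definite $W$ this is Ozsv\'ath and Szab\'o's original handle-by-handle argument plus the blow-up formula; the semi-definite case additionally needs the framing-zero two-handles --- equivalently, the null-homologous directions in $H_2(X)$ --- to be handled, and needs the hypothesis that $H^1(X;\Z)\to H^1(Y;\Z)$ is trivial to be identified as exactly what forces the attendant $H_1(Y)$-action to collapse, so that the bound is governed by $d_b$ with the stated correction $2b_1(Y)$. The remaining ingredients --- the grading-shift formula, the exact-triangle chase, and the Euler characteristic and signature bookkeeping --- are standard.
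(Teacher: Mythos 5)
This proposition is quoted from Ozsv\'ath--Szab\'o \cite{Ozsvath:2003ti} (it is their Theorem 9.15); the paper gives no proof of it, so there is nothing internal to compare your argument against. Your outline is, in essence, a faithful reconstruction of the cited proof: puncture $X$ to get a cobordism $W$ from $S^{3}$ to $Y$, use the degree-shift formula $\frac{1}{4}\left(c_{1}(\s)^{2}-2\chi(W)-3\sigma(W)\right)$, establish nontriviality of $F^{\infty}_{W,\s}$ by a handle-by-handle analysis in which negative semi-definiteness of the linking matrix is the key hypothesis, and convert this into a grading bound on a $U$-nontorsion element of $HF^{-}(Y,\t)$ lying in the kernel of the $H$-action, with the Euler-characteristic and signature bookkeeping (via the radical of $Q_X$ and the triviality of $H^{1}(X;\Z)\rightarrow H^{1}(Y;\Z)$) producing the $2b_{1}(Y)$ term. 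The one place your sketch is loosest is exactly the step you flag as the main obstacle: showing that the image element is both $U$-nontorsion and annihilated by the $H$-action is where the ``standard $HF^{\infty}$'' hypothesis and Ozsv\'ath--Szab\'o's structural results on $HF^{\infty}$ for torsion $\spinc$ structures do real work, and your phrase ``up to the grading width $b_{1}(Y)$ of the $\Lambda^{*}H$ factor'' double-counts slightly---in the cited argument the $2b_{1}(Y)$ is accounted for once, through $\chi(W)$ and the rank of the radical, not again through a grading spread. As a blind reconstruction of an external theorem's proof strategy, however, this is accurate.
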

In the previous section, we constructed an orientable surface $F^{\prime}\subset S^{2}\times S^{2}\backslash B^{4}$
with boundary $K\subset S^{3}$. Attach a $-1$-framed $2$-handle
along $K$ to form a $4$-manifold $\overline{W}$ with boundary $S_{-1}^{3}(K)$
and intersection form\[
Q_{\overline{W}}=\left(\begin{array}{ccc}
-1 & 0 & 0\\
0 & 0 & 1\\
0 & 1 & 0\end{array}\right).\]
We may cap off $F^{\prime}$ with the core of the $2$-handle to form
a closed surface $\Sigma$ with genus $g=(b_{1}(F)-1)/2$, homology
class $(1,2,m)$, and self-intersection \[
n:=4m-1=e(F)+1>0\]
If we decompose $\overline{W}=\nu(\Sigma)\cup W$, then $W$ will
be a negative semi-definite cobordism from $Y_{g,n}$, the Euler number
$n$ circle bundle over $\Sigma$, to $S_{-1}^{3}(K)$. Alternatively,
we can view $W$ as a negative semi-definite four-manifold with disconnected
boundary $Y_{g,-n}\coprod S_{-1}^{3}(K)$. To apply the above proposition,
and so prove Theorem \ref{thm:d-invt theorem}, we need to understand
the homology, $HF^{\infty}$, and $d$-invariants of $Y_{g,n}$ and
$S_{-1}^{3}(K)$, and the intersection form on $W$.

The Gysin sequence for the disk bundle $\nu(\Sigma)$ gives

\[
0\rightarrow H^{1}(\nu(\Sigma))\rightarrow H^{1}(Y_{g,n})\rightarrow H^{2}(\nu(\Sigma),Y_{g,n})\overset{e}{\longrightarrow}H^{2}(\nu(\Sigma))\rightarrow H^{2}(Y_{g,n})\rightarrow H^{1}(\Sigma)\rightarrow0\]
where $e\in H^{2}(\nu(\Sigma))\cong\Z$ is $n$ times the generator.
Thus $H^{2}(Y_{g,n})\cong\Z^{2g}\oplus\Z/n$. Note that the restriction
of $H^{1}(\nu(\Sigma))$ to $H^{1}(Y_{g,n})$ is an isomorphism. Since
$H^{1}(\overline{W})=0$ (no $1$-handles were used in its construction),
the Mayer-Vietoris sequence

\[
0\rightarrow H^{1}(\overline{W})\rightarrow H^{1}(\nu(\Sigma))\oplus H^{1}(W)\rightarrow H^{1}(Y_{g,n})\rightarrow H^{2}(\overline{W})\rightarrow H^{2}(\nu(\Sigma))\oplus H^{2}(W)\rightarrow H^{2}(Y_{g,n})\]
shows that $H^{1}(W)=0$, trivially satisfying the restriction hypothesis
of Proposition \ref{pro:d-invt-inequality}. Since $H^{2}(\overline{W})\cong\Z^{3}$
has no $2$-torsion, a $\spinc$ structure on $\overline{W}$ is determined
by its first chern class. Any $\spinc$ structure on $W$ will give
us some inequality between $d$-invariants, but we will only need
to consider a certain $\spinc$ structure $\s_{t}$ with $PD(c_{1}(\s_{t}))=(\pm1,2,2a)$,
where \[
a=\frac{2(m-g)-1\pm1}{4}\]
and the sign is chosen so as to make $a$ an integer. The given vector
is characteristic for $Q_{\overline{W}}$, so does correspond to a
$\spinc$ structure. Crucially for our later use, $c_{1}(\s_{t})$
evaluates to $n-2g$ on $\Sigma$.

To compute the $c_{1}^{2}$ term in the proposition, we decompose
the intersection form of $\overline{W}$ in terms of the $\Q$-valued
intersection forms on $\nu(\Sigma)$ and $W$: if $c\in H^{2}(\overline{W})$,
then \[
Q_{W}(c)=Q_{\nu(\Sigma)}\left(c\vert_{\nu(\Sigma)}\right)+Q_{W}\left(c\vert_{W}\right).\]
A generator of $H^{2}(\nu(\Sigma),Y_{g,n})$ maps to $n$ times the
generator of $H^{2}(\nu(\Sigma))$ in the gysin sequence above, so
$Q_{\nu(\Sigma)}=\left(\frac{1}{n}\right)$. The value of $c\vert_{\nu(\Sigma)}\in H^{2}(\nu(\Sigma))$
is determined by integrating it over $\Sigma$, giving \begin{equation}
Q_{\overline{W}}(c)=\frac{\left\langle c,[\Sigma]\right\rangle ^{2}}{n}+Q_{W}\left(c\vert_{W}\right).\label{eq:intersectionform}\end{equation}
In our case, \[
c_{1}(\s_{t}\vert_{W})^{2}=Q_{\overline{W}}(c_{1}(\s_{t}))-\frac{\left\langle c_{1}(\s_{t}),[\Sigma]\right\rangle ^{2}}{n}=-1+8a-\frac{(n-2g)^{2}}{n}=-2\pm2-\frac{4g^{2}}{n}.\]

The relevant $d$-invariant of $Y_{g,-n}$ is computed in section
$9$ of \cite{Ozsvath:2003ti}, for use in their proof of the Thom
conjecture. If $n>2g$, then \[
d_{b}\left(Y_{g,-n},\s_{t}\vert_{Y_{g,-n}}\right)=\frac{1}{4}-\frac{g^{2}}{n}-\frac{n}{4}.\]
That calculation uses the integer surgeries exact sequence associated
to the Borromean knot in $K\subset\#^{2g}S^{1}\times S^{2}$: the
$-n$ surgery on $K$ gives $Y_{g,-n}$. Since $\#^{2g}S^{1}\times S^{2}$
has standard $HF^{\infty}$, so does $Y_{g,-n}$ (cf. Proposition
$9.4$ of \cite{Ozsvath:2003ti}). Finally, since $S_{-1}^{3}(K)$
is an integer homology sphere, it also has standard $HF^{\infty}$.

We are now ready to prove Theorem \ref{thm:d-invt theorem}. By Proposition
\ref{pro:d-invt-inequality}, we have

\[
c_{1}(\s_{t})^{2}+b_{2}^{-}(W)\leq4d_{b}(Y_{g,-n},\t)+4d(S_{-1}^{3}(K))+2b_{1}(Y_{g,-n})+2b_{1}(S_{-1}^{3}(K)).\]
After substituting all the values computed above, this reduces to\[
\left(-2\pm2-\frac{4g^{2}}{n}\right)+2\leq4\left(\frac{1}{4}-\frac{g^{2}}{n}-\frac{n}{4}\right)+4d\left(S_{-1}^{3}(K)\right)+2(2g).\]
If we take the unfavorable sign on $\pm2$, and recall that $b_{1}(F)=2g+1$
and $e(F)+1=n$, we get the inequality\begin{equation}
\frac{e(F)}{2}\leq2d\left(S_{-1}^{3}(K)\right)+b_{1}(F).\label{eq:d-invt ineq}\end{equation}
This argument relied on a value for $d_{b}(Y_{g,-n})$ only valid
if $n>2g$, ie, $e(F)+2\geq b_{1}(F)$. Proposition \ref{pro:d-invt-inequality},
applied to the surgery cobordism $S^{3}\rightarrow S_{-1}^{3}(K)$,
guarantees that $d(S_{-1}^{3}(K))\geq0$, so if $e(F)+2<b_{1}(F)$,
the above inequality is trivially satisfied. 

The initial construction of an orientable replacement required that
$b_{1}(F)$ be odd. Luckily, both sides of Equation \ref{eq:d-invt ineq}
change by the same amount under a positive real 'blow-up.' More precisely,
if we connect sum $F\subset B^{4}$ with the standard embedding of
$\R\P^{2}\subset S^{4}$ with Euler number $+2$, then both $b_{1}$
and $e/2$ increase by $1$. One way to construct this $\R\P^{2}$
is to glue together the Mobius band and disk bounding $T_{2,-1}$
(cf the mirror of Figure \ref{Flo:T_2k,2k+1} at $k=0$), then push
them off into opposite sides of $S^{3}\subset S^{4}$. If $b_{1}(F)$
is even, we may apply Equation \ref{eq:d-invt ineq} to $F\#\R\P^{2}$,
and so deduce it for $F$.

This completes the proof of Theorem \ref{thm:d-invt theorem}, and
hence of Theorem \ref{thm:main theorem}.
\begin{rem*}
Our final lower bound on $\gamma_{4}$ is the gap $\frac{\sigma(K)}{2}-d(S_{-1}^{3}(K))$.
For alternating knots, this quantity is nonpositive--in \cite{Ozsvath:2002wl},
Ozsváth and Szabó show that\[
d\left(S_{-1}^{3}(K)\right)=\max\left(0,2\left\lceil \frac{\sigma(K)}{4}\right\rceil \right)\]
For nonalternating knots, $\frac{\sigma(-)}{2}$ and $d(S_{-1}^{3}(-))$
can diverge widely, though both invariants satisfy a crossing-change
inequality \cite{PETERS:ts}:\[
\eta(K_{+})\leq\eta(K_{-})\leq\eta(K_{+})+2.\]
If $K$ becomes alternating after $c$ crossing changes, then $\frac{\sigma(K)}{2}-d(S_{-1}^{3}(K))$
can be as large as $2c$.
\end{rem*}

\section{Torus Knots}

Signatures of torus knots satisfy a recursion relation \cite{Murasugi:2008vv}.
If $\sigma(p,q):=\sigma(T_{-p,q})$, then\[
\sigma(p,q)=\begin{cases}
\sigma(q,p) & \mbox{ if }q>p\\
\sigma(p-2q,q)+q^{2}\;(-1) & \mbox{ if }2q<p\mbox{ }(q\mbox{ odd)}\\
-\sigma(2q-p,p)+q^{2}-2\;(+1) & \mbox{ if }2q>p\;(q\mbox{ odd})\\
p-1 & \mbox{ if }q=2\\
0 & \mbox{ if }q=1\end{cases}\]

Let $\sigma_{k}:=\sigma(T_{-2k,2k-1})=\sigma(2k,2k-1)$. Applying
the first and third conditions twice, we arrive at the recursion \[
\sigma_{k}=4k-2+\sigma_{k-1},\]
whence $\sigma_{k}=2k^{2}-2$.

The $d$-invariants of torus knots are also simple to compute, since
they admit lens space surgeries.
\begin{prop}
\cite{Ozsvath:2003ti} Let $K$ be a knot admitting a positive lens
space surgery. Then \[
d_{-1/2}(S_{0}^{3}(K))=-\frac{1}{2}\qquad\mbox{and}\qquad d_{1/2}(S_{0}^{3}(K))=\frac{1}{2}-2t_{0}\]
where if \[
\Delta_{K}(T)=a_{0}+\sum_{j=1}^{d}a_{j}(T^{j}+T^{-j})\]
then \[
t_{0}=\sum_{j=1}^{d}ja_{j}.\]

\end{prop}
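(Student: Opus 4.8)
The plan is to reduce this to a computation with the knot filtration of $K$ — what is nowadays repackaged as $CFK^{\infty}(K)$ — combined with the surgery formula relating that complex to $HF^{+}$ of $S_{0}^{3}(K)$. The essential point is the rigidity of knots with a positive lens space surgery: by Ozsváth and Szabó's analysis of such knots, every surgery on $K$ of slope at least $2g(K)-1$ is an L-space, $\widehat{HFK}(K,j)$ has rank at most one in each Alexander grading, the nonzero groups occur precisely in the gradings $j$ with $a_{j}=\pm1$ (the signs alternating as $j$ descends from $d=g(K)$), and in fact $CFK^{\infty}(K)$ is filtered chain homotopy equivalent to the \emph{staircase complex} $\mathrm{St}(\Delta_{K})$ whose step lengths are the successive gaps between the exponents occurring in $\Delta_{K}$. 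This structure theorem is the one substantive input; the rest is bookkeeping.

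Granting it, I would run the large integral surgeries on $K$ through the surgery exact sequence (equivalently, the integer surgery mapping cone), keeping track of the absolute $\Q$-gradings. The large-surgery formula identifies $HF^{+}(S_{N}^{3}(K),[0])$, for $N\gg0$, with the homology of the subquotient complex $A_{0}^{+}=C\{\max(i,j)\geq0\}$ of $CFK^{\infty}(K)$, up to an explicit grading shift; this homology is a single tower $\mathcal{T}^{+}$, and the resulting correction term is the one for $S_{N}^{3}(U)$ decreased by $2V_{0}(K)$, for an integer $V_{0}(K)\geq0$ depending only on $CFK^{\infty}(K)$. Feeding this into the exact sequence and reading off the two correction terms of $S_{0}^{3}(K)$ in its unique torsion $\spinc$ structure — $S_{0}^{3}(K)$ has standard $HF^{\infty}$, automatically, since $b_{1}=1$ — yields
\[
d_{1/2}\left(S_{0}^{3}(K)\right)=\frac{1}{2}-2V_{0}(K),\qquad d_{-1/2}\left(S_{0}^{3}(K)\right)=-\frac{1}{2}+2V_{0}(\overline{K}),
\]
with the unknot case $V_{0}(U)=V_{0}(\overline{U})=0$ recovering $d_{\pm1/2}(S_{0}^{3}(U))=\pm1/2$. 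It then remains to evaluate $V_{0}$ on the staircase and on its dual. A short computation with $A_{0}^{+}$ for $\mathrm{St}(\Delta_{K})$ gives $V_{0}(K)=t_{0}$, where $t_{0}=\sum_{j\geq1}ja_{j}$: the cleanest route is the summation-by-parts identity $t_{0}=\sum_{j\geq1}\sum_{i\geq j}a_{i}$, whose inner sum takes values in $\{0,1\}$ and, summed over $j$, counts exactly the vertical drop $V_{0}(K)$ of the staircase. Since $CFK^{\infty}(\overline{K})=\mathrm{St}(\Delta_{K})^{*}$ is a descending staircase, the same computation gives $V_{0}(\overline{K})=0$; substituting both values into the displayed formula yields $d_{-1/2}(S_{0}^{3}(K))=-1/2$ and $d_{1/2}(S_{0}^{3}(K))=1/2-2t_{0}$, as asserted.

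The hard part is the grading arithmetic in the middle step. The large-surgery formula and the surgery exact sequence each carry their own $\spinc$-dependent grading shifts, and isolating the precise half-integer shifts $\pm1/2$ attached to $S_{0}^{3}(K)$ — as opposed to those attached to $S_{N}^{3}(K)$, or to a nontorsion $\spinc$ structure — while also checking that the distinguished tower-bottom elements genuinely lie in the image of $\pi$ (and, for $d_{-1/2}$, in the kernel of the $H$-action) requires real care. The rigidity of $CFK^{\infty}(K)$ for a knot admitting a positive lens space surgery, by contrast, I would simply quote from Ozsváth and Szabó.
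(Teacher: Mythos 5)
The paper does not prove this statement at all: it is quoted verbatim from Ozsv\'ath--Szab\'o \cite{Ozsvath:2003ti}, so there is no internal proof to compare against. Judged on its own terms, your outline is a correct derivation, but it is a genuinely different (and anachronistic) route from the one in the cited source. You run everything through $CFK^{\infty}$: the structure theorem that a knot with a positive lens space surgery has a staircase complex, the large-surgery/mapping-cone identification giving $d(S^{3}_{1}(K))=-2V_{0}(K)$ and $d(S^{3}_{-1}(K))=2V_{0}(\overline{K})$, the exact-triangle relations $d_{1/2}(S_{0}^{3}(K))=d(S^{3}_{1}(K))+\tfrac{1}{2}$, $d_{-1/2}(S_{0}^{3}(K))=d(S^{3}_{-1}(K))-\tfrac{1}{2}$ (which is precisely the Proposition 4.12 quoted elsewhere in this paper, so no circularity), and finally the evaluation $V_{0}(K)=t_{0}(K)$, $V_{0}(\overline{K})=0$ on the staircase and its dual; your summation-by-parts identity $t_{0}=\sum_{j\geq1}\sum_{i\geq j}a_{i}$ with tail sums in $\{0,1\}$ is correct for such knots since the Alexander coefficients alternate in sign from the top degree, and the numerics check out on $T_{2,2n+1}$ and $T_{3,4}$. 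By contrast, Ozsv\'ath--Szab\'o's original argument predates the staircase/$V_{0}$ technology: they use the surgeries exact sequence together with the lens space hypothesis to pin down $HF^{+}(S^{3}_{0}(K))$ in the torsion $\spinc$ structure directly from the Alexander polynomial, the torsion coefficients $t_{i}$ entering through the Euler characteristic of $HF^{+}$, and then read off the two correction terms. Your route buys a cleaner conceptual package (everything reduces to one model complex), at the cost of importing several later theorems (the staircase structure of L-space knots, the $V_{0}$ surgery formulas) as black boxes, and of the deferred grading bookkeeping you acknowledge; since those imported results are standard and their proofs do not rely on the present proposition, the plan is sound, but as written it is a reduction to quoted machinery rather than a self-contained proof.
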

The $d$-invariants of zero-surgery are related to those of $\pm1$-surgery
via Proposition 4.12 of \cite{Ozsvath:2003ti}:\[
d\left(S_{-1}^{3}(K)\right)=d_{-1/2}\left(S_{0}^{3}(K)\right)+\frac{1}{2}\qquad d\left(S_{1}^{3}(K)\right)=d_{1/2}\left(S_{0}^{3}(K)\right)-\frac{1}{2}.\]
Since $T_{p,q}$ admits a positive lens space surgery, we have\[
d\left(S_{-1}^{3}\left(T_{-p,q}\right)\right)=-d\left(S_{1}^{3}\left(T_{p,q}\right)\right)=-\left(d_{1/2}\left(S_{0}^{3}\left(T_{p,q}\right)\right)-\frac{1}{2}\right)=2t_{0}.\]
The Alexander polynomial of $T_{p,q}$ is \[
\Delta_{T_{p,q}}(T)=T^{-(p-1)(q-1)/2}\frac{(1-T)(1-T^{pq})}{(1-T^{p})(1-T^{q})}.\]

For torus knots $T_{2k,2k-1}$, the Alexander polynomial has a simple
form: \[
\Delta_{T_{2k,2k-1}}=\sum_{j=1}^{k-1}T^{j(2k-1)}-T^{j(2k-1)-(k-j)}+T^{-j(2k-1)}-T^{-j(2k-1)+(k-j)}\]
so \[
t_{0}=\sum_{j=1}^{k-1}j(2k-1)-(j(2k-1)-(k-j)=\sum_{j=1}^{k-1}k-j=\frac{k^{2}-k}{2}.\]
Hence \[
d\left(S_{-1}^{3}\left(T_{-2k,2k-1}\right)\right)=k^{2}-k.\]
The relevant difference between signature and $d$ is \[
\frac{\sigma}{2}-d=k^{2}-1-\left(k^{2}-k\right)=k-1.\]
Of course, the reflection of a surface bounding $T_{2k,2k-1}$ bounds
$T_{-2k,2k-1}$.

\begin{figure}
\includegraphics[width=0.7\columnwidth]{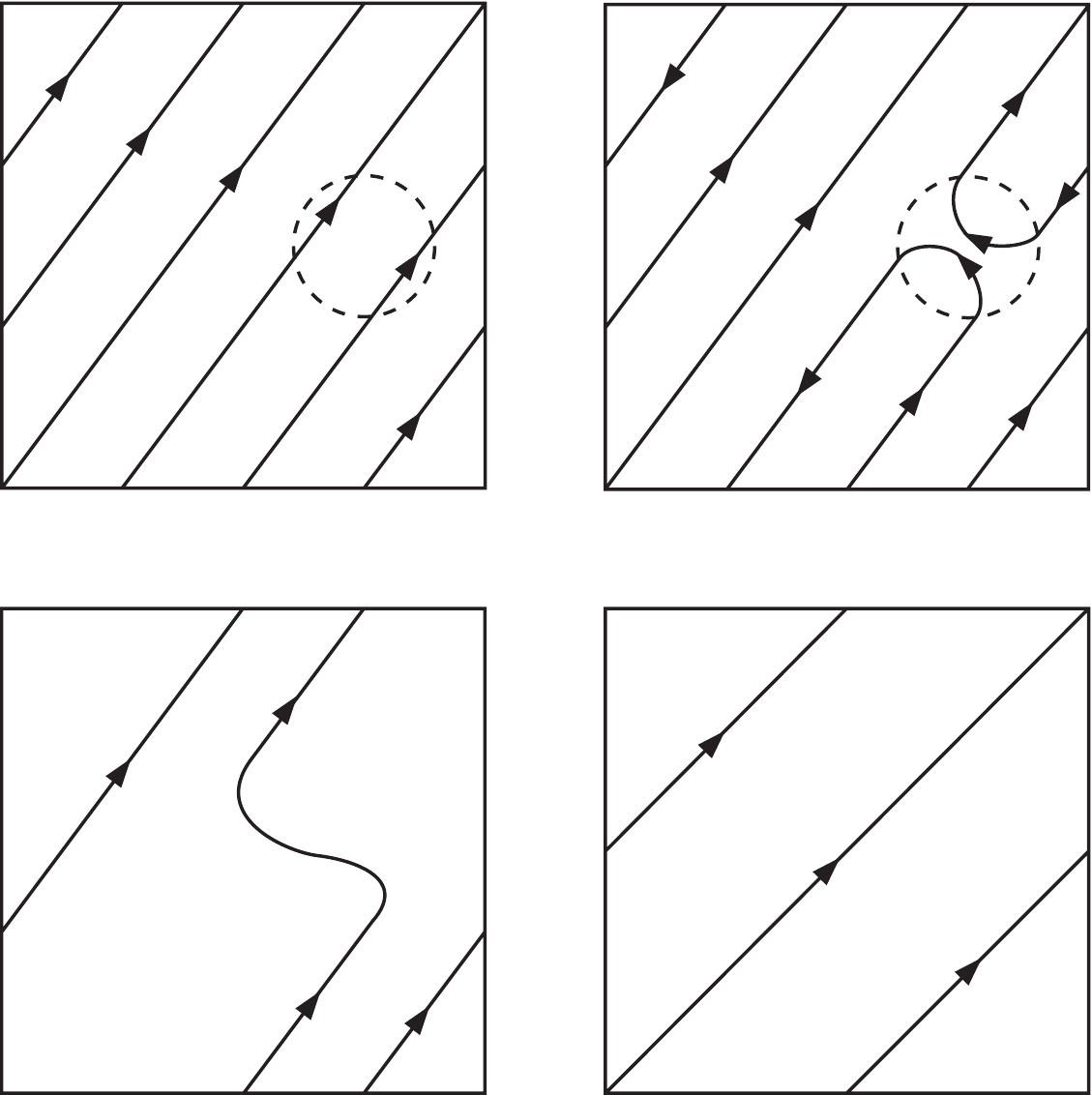}\caption{A cobordism from $T_{4,3}$ to $T_{2,1}$}
\label{Flo:torus cobordism}
\end{figure}

\begin{cor}
If $F\subset B^{4}$ is a smoothly embedded nonorientable surface
bounding $T_{2k,2k-1}\subset S^{3}$, then $b_{1}(F)\geq k-1$.
\end{cor}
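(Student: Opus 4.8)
The plan is to read this off Theorem \ref{thm:main theorem}, whose lower bound is $\tfrac{\sigma(K)}{2}-d(S_{-1}^{3}(K))$, but with one twist that is genuinely necessary: this bound is \emph{negative} for $K=T_{2k,2k-1}$ itself (a positive torus knot has negative signature, and $d(S_{-1}^{3}(\cdot))\ge 0$ by the surgery-cobordism argument of Section 3), so the theorem must be applied to the mirror. Given a smoothly embedded nonorientable $F\subset B^{4}$ with $\del F=T_{2k,2k-1}$, reflecting $B^{4}$ produces a smoothly embedded nonorientable $\overline{F}\subset B^{4}$ with $b_{1}(\overline F)=b_{1}(F)$ and boundary the mirror knot $\overline{T_{2k,2k-1}}=T_{-2k,2k-1}$. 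Hence it suffices to evaluate $\tfrac{\sigma(T_{-2k,2k-1})}{2}-d(S_{-1}^{3}(T_{-2k,2k-1}))$ and feed $\overline F$ into Theorem \ref{thm:main theorem}.

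The two inputs are exactly the computations carried out in this section. For the signature, set $\sigma_{k}:=\sigma(T_{-2k,2k-1})$ and run Murasugi's recursion \cite{Murasugi:2008vv}: the index-swap clause together with the $2q>p$ (with $q$ odd) clause, applied twice, telescopes $\sigma_{k}$ to $\sigma_{k-1}+4k-2$, and with base case $\sigma_{1}=0$ this solves to $\sigma_{k}=2k^{2}-2$. For the $d$-invariant, $T_{2k,2k-1}$ admits a positive lens space surgery, so the Ozsv\'ath--Szab\'o formula \cite{Ozsvath:2003ti}, via the $0$- and $\pm1$-surgery relations, gives $d(S_{-1}^{3}(T_{-2k,2k-1}))=2t_{0}$ with $t_{0}=\sum_{j\ge 1}ja_{j}$ extracted from $\Delta_{T_{2k,2k-1}}$; simplifying that Alexander polynomial from the general torus-knot product and collapsing the sum gives $t_{0}=\sum_{j=1}^{k-1}(k-j)=\tfrac{k^{2}-k}{2}$, so $d(S_{-1}^{3}(T_{-2k,2k-1}))=k^{2}-k$.

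Combining,
\[
\frac{\sigma(T_{-2k,2k-1})}{2}-d\!\left(S_{-1}^{3}(T_{-2k,2k-1})\right)=(k^{2}-1)-(k^{2}-k)=k-1,
\]
and Theorem \ref{thm:main theorem} applied to $\overline F$ yields $b_{1}(F)=b_{1}(\overline F)\ge k-1$.

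I do not expect a real obstacle here: all of the substance sits in Theorem \ref{thm:main theorem} and in the arithmetic already performed above, so what remains is bookkeeping. The one genuinely necessary — and easily overlooked — point is the passage to the mirror: the gap $\tfrac{\sigma}{2}-d$ is vacuous on $T_{2k,2k-1}$ as stated and becomes sharp only on its reflection, so I would be careful with the orientation/sign conventions when invoking Theorem \ref{thm:main theorem}. As a sanity check I would re-run the case $k=3$ by hand, where the statement asserts that $T_{6,5}$ bounds no smooth M\"obius band in $B^{4}$.
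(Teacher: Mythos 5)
Your proposal is correct and follows the paper's own route: Murasugi's recursion for $\sigma(T_{-2k,2k-1})=2k^{2}-2$, the lens-space-surgery formula giving $d(S_{-1}^{3}(T_{-2k,2k-1}))=2t_{0}=k^{2}-k$, and the reflection of $F$ to a surface bounding the mirror knot before invoking Theorem \ref{thm:main theorem}. The mirror step you flag as easy to overlook is indeed the same one the paper dispatches with its closing remark that the reflection of a surface bounding $T_{2k,2k-1}$ bounds $T_{-2k,2k-1}$.
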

We obtain this lower bound by the following construction. Consider
$T_{p,q}$ as actually lying in a standard torus, as in Figure \ref{Flo:torus cobordism}.
Take any two adjacent strands and join them with a strip, or, equivalently,
perform an index $1$ Morse move merging them. The resulting cobordism
is nonorientable, since the strands were parallel; it is a punctured
Mobius band. Since the resulting knot still lives on the torus, it
must be $T_{r,s}$ for some $r$ and $s$. The values of $r$ and
$s$ can be easily computed by orienting the resulting knot and counting
the signed intersection with the horizontal and vertical generators
of $H_{1}(T^{2})$. A short calculation shows that \[
r=p-2t\qquad s=q-2h\]
where $t\equiv-q^{-1}\mbox{ mod }p$, with $0\leq t<p$, and $h\equiv p^{-1}\mbox{ mod }q$,
with $0\leq h<q$. After an isotopy, $T_{r,s}$ will be in standard,
taut form on the torus, and we can repeat the process. Eventually,
we arrive at $T_{n,1}$ for some $n$, which is just an unknot. By
concatenating all of these cobordisms, then capping off the final
unknot with a disk, we have succesfully found a surface $F_{p,q}$
in $B^{4}$ bounding $T_{p,q}$.

For example, if $p=2k$ and $q=2k-1$, we have $t=-(-1)^{-1}=1$ and
$h=1^{-1}=1$, giving $r=2k-2$ and $s=2k-3$. Thus $T_{2k,2k-1}$
has a $\chi=-1$ cobordism to $T_{2(k-1),2(k-1)-1}$. Concatenate
$k-1$ of these, then cap off $T_{2,1}$ with a disk to get a closed
surface $F_{2k,2k-1}\subset B^{4}$ bounding $T_{2k,2k-1}$, with
$b_{1}(F_{2k,2k-1})=k-1$.

Since the isotopies and Morse moves take place inside of the torus,
we can actually embed each of these cobordisms in a thickened torus
$T^{2}\times[-\epsilon,\epsilon]$ in $S^{3}$, where we view the
$[-\epsilon,\epsilon]$ direction as a sort of time. The obstruction
to embedding all of $F_{p,q}$ in $S^{3}$ is that the final disk
bounding $T_{n,1}$ cuts through all of the previous layers, unless
$n=0$. To get a surface in $S^{3}$, we must continue with these
within-torus cobordisms: $T_{n,1}\mapsto T_{n-2,1}\mapsto\cdots$.
If $n$ is even, or, equivalently, if $pq$ was even to start, then
we do get a surface in $S^{3}$. Teragaito has computed $\gamma_{3}(T_{p,q})$,
and it agrees with $b_{1}(F)$ \cite{Teragaito:2004te}. For example,
$\gamma_{3}(T_{2k,2k-1})=k$. If $n$ is odd, then this construction
fails to give a surface in $S^{3}$, though a slight modification
(cf. \cite{Teragaito:2004te} Remark 4.9) will do.

We conjecture that the surfaces $F_{p,q}$ bounding $T_{p,q}$ are
best possible, that $b_{1}(F_{p,q})=\gamma_{4}(T_{p,q})$. Many pairs
$(p,q)$ for which the conjecture holds can be certified using the
$d$-invariant bounds of this paper. Similar invariants, derived by
considering larger surgeries on the knot, give even more examples.
These stronger bounds will be discussed in a forthcoming paper.

\bibliographystyle{math.bst}
\bibliography{nonorientable.bib}

\end{document}